\documentclass[11pt]{amsart}
\usepackage{}
\usepackage{amsmath,amssymb,amsthm}
\usepackage[latin1]{inputenc}
\usepackage{version,tabularx,multicol}
\usepackage{graphicx,float}
\usepackage{color}

\usepackage{stmaryrd}

\textheight=624pt
\oddsidemargin=18pt
\topmargin=-0pt
\textwidth=15,5cm
\evensidemargin=18pt

\newcommand{\reff}[1]{(\ref{#1})}

\theoremstyle{plain}
\newtheorem{theo}{Theorem}[section]

\newtheorem{cor}[theo]{Corollary}
\newtheorem{prop}[theo]{Proposition}

\newtheorem{lem}[theo]{Lemma}

\theoremstyle{remark}
\newtheorem{rem}[theo]{Remark}


\newcommand{\pthe}{\psi_{\theta}}
\newcommand{\uthe}[1]{u^{\theta}(\lambda, #1)}
\newcommand{\cthe}[1]{c^{\theta}(#1)}

\newcommand{\ebt}{\mathrm{e}^{2 \beta \theta t}}

\newcommand{\embt}{\mathrm{e}^{-2 \beta \theta t}}
\newcommand{\embs}{\mathrm{e}^{-2 \beta \theta s}}

\newcommand{\lam}{\lambda}

\newcommand{\ck}{{\mathcal K}}

\newcommand{\cm}{{\mathcal M}}

\newcommand{\Tau}{{\mathcal T}}

\newcommand{ \cz}{\mathcal Z}

\newcommand{\E}{{\mathbb E}}

\newcommand{\N}{{\mathbb N}}
\renewcommand{\P}{{\mathbb P}}

\newcommand{\R}{{\mathbb R}}

\newcommand{\rN}{{\rm N}}
\newcommand{\rP}{{\rm P}}
\newcommand{\rE}{{\rm E}}

\newcommand{\ind}{{\bf 1}}

\newcommand{\inv}[1]{\mathop{\frac{1}{ #1}}\nolimits}
\newcommand{\expp}[1]{\mathop {\mathrm{e}^{ #1}}}
\newcommand{\val}[1]{\mathop{\left| #1 \right|}\nolimits}

\title[Branching process with  non-neutral mutations]{A population model
  with non-neutral mutations using branching processes with immigration}

\date{\today}
\author{Hongwei Bi}
\address{
Hongwei Bi,
Université Paris-Est, CERMICS (ENPC), F-77455 Marne La Vallée, France
and 
School of Mathematical Sciences,
Beijing Normal University,
Beijing 100875, China.
}
\email{bih@cermics.enpc.fr}

\author{Jean-François Delmas}

\address{
Jean-Fran\c cois Delmas,
Université Paris-Est, CERMICS (ENPC), F-77455 Marne La Vallée, France.}

\email{delmas@cermics.enpc.fr}

\begin{document}

\subjclass[2010]{Primary: 60J80, 92D10; Secondary: 60G10, 60G55, 60J68, 92D25}

\keywords{non-neutral mutation, branching process, immigration,
  bottleneck, population model, genealogical tree, MRCA}

\begin{abstract}
  We  consider a stationary  continuous model  of random  size population
  with non-neutral mutations using  a continuous state branching process
  with non-homogeneous immigration.  We assume the type (or mutation) of
  the immigrants  is random given  by a constant mutation  rate
  measure. We determine  some genealogical properties
  of this process  such as: distribution of the time  to the most recent
  common  ancestor (MRCA),  bottleneck effect  at the  time to  the MRCA
  (which  might be drastic  for some  mutation rate  measures), favorable
  type for the MRCA, asymptotics of the number of ancestors.
\end{abstract}

\maketitle

\author{}
\date{\today}
\maketitle

\section{Introduction}

\subsection{Motivations}

Galton-Watson (GW)  processes are branching  processes modeling discrete
populations in discrete time. Since (non-degenerate) GW processes either
become  extinct  or  blow  up  at  infinity, one  needs  to  consider  a
stationary version of the GW  process, such as a sub-critical GW process
with immigration  to model the  stationary population. It is  well known
that  the  rescaled  limit  in  time  and  space  of  GW  processes  are
continuous-state   branching  (CB)   processes,   see  \textsc{Lamperti}
\cite{l:lsbp},  and  that  the  rescaled  limit  of  GW  processes  with
immigration are CB processes with immigration (CBI), see \textsc{Kawazu}
and \textsc{Watanabe} \cite{kw:bpirlt}.  Sub-critical CB process becomes
extinct a.s., and conditioning this  process not to become extinct gives
a CBI with a particular  immigration which is a natural continuous model
for  populations  with  stationary   random  size.   For  the  study  of
genealogical properties  of CBI we are interested  in, see \textsc{Chen}
and  \textsc{Delmas} \cite{cd:spstsbp}  and  \textsc{Bi} \cite{b:tmscp},
for a  more general immigration.  The  aim of this paper  is to consider
the  simplest CB  process (with  quadratic branching  mechanism)  and an
immigration taking  into account non-neutral mutations.   We shall prove
the  existence  of this  stationary  continuous  process  and give  some
genealogical properties, such as a  bottleneck effect at the time to the
most recent  common ancestor (TMRCA),  favorable mutation for  the most
recent common ancestor (MRCA) of the process.

\subsection{Constant size population models}

A large literature is devoted to the constant size population: Wright-Fisher
model  (discrete  time,  discrete  population),   Moran model    (continuous
time,  discrete  population) and   Fleming-Viot  process  (continuous  time,
continuum  for the  population).  Neutral  models can  be    described using
spatial    Fleming-Viot processes,    see \textsc{Dawson}
\cite{d:mvmp} and \textsc{Donnelly} and \textsc{Kurtz} \cite{dk:prmvpm}. 

Non-neutral mutation  models in stationary regime  have been considered
by  \textsc{Neuhauser} and  \textsc{Krone}  \cite{nk:gsms} for  discrete
population,   by   \textsc{Fearnhead}   \cite{f:psnpgm}   for   discrete
population  (possibly  with random  size)  in  continuous  time, and  by
\textsc{Stephens}    and    \textsc{Donnelly}    \cite{sd:ajpgms}    and
\textsc{Donnelly}  as  well   as  \textsc{Nordborg}  and  \textsc{Joyce}
\cite{dnj:lsmcnpgm}  for continuous  models  in which  the  type of  the
mutant does not depend on the type of the parent.  In \textsc{Fearnhead}
\cite{f:canl} and \textsc{Taylor} \cite{t:capwfd} the MRCA is  studied. 
In particular it is  shown in \cite{f:canl} that the  expected fitness of
MRCA is greater  than that of  a randomly chosen individual.

Notice  that the  non-neutral  models studied  by \textsc{Donnelly}  and
\textsc{Kurtz} \cite{dk:gpfvmsr} are  non-stationary.  Such models could
be  made stationary  by conditioning  on the  non-extinction of  all the
types, see \textsc{Foucart} and \textsc{Hénard} \cite{fh:scbpibfvpi} for
a  work   in  this  direction.   For  non-stationary   models  see  also
\textsc{Bianconi},  \textsc{Ferretti} and  \textsc{Franz} \cite{bff:ntb}
for constant size discrete population in discrete time.  In those latter
models the  non-neutral mutations are described using  an immigration at
constant rate but with various fitness.

\subsection{Random  size population models}

Another  large literature  is devoted  to random  size  population using
branching   processes.   Neutral   models  are   now  well   known,  see
\textsc{Bertoin}  \cite{b:saptpgwpnm,  b:lttabprnm}  for  GW  processes,
\textsc{Champagnat}, \textsc{Lambert}   and \textsc{Richard}  \cite{clm:bdpnm} 
for Crump-Mode-Jagers branching processes   and   \textsc{Abraham}   and
\textsc{Delmas}  \cite{ad:cbmcsbpui,ad:wdlcrtseppnm} with  a  CB process
presentation  in \cite{ad:cbmcsbpui}  and a  genealogical  tree approach
using  continuous random  tree  presentation in  \cite{ad:wdlcrtseppnm}.
Conditioning  on non-extinction  will  provide a  stationary model,  see
\cite{cd:spstsbp} in the CB process setting.

One can  also use multi-type processes for  non-neutral mutation models;
in all those models the rate  of mutation is proportional to the size of
the  current  population.   For   the  discrete  setting,  we  refer  to
\textsc{Athreay}   and  \textsc{Ney}   \cite{an:bp}  on   multi-type  GW
processes,    see    also    \textsc{Buiculescu}   \cite{b:qdmgwp}    or
\textsc{Nakagawa}  \cite{n:qpamgwpar}  for  sub-critical  multi-type  GW
processes conditioned on  non-extinction. Those latter processes provide
natural  stationary models.   Similar  results exist  for multi-type  CB
process   or  non-homogeneous   super-processes  (which   correspond  to
infinitely    many   types)    conditioned    on   non-extinction    see
\textsc{Champagnat}  and  \textsc{Roelly}  \cite{cr:ltcmdwpfd}  for  the
former case and  \textsc{Delmas} and \textsc{Hénard} \cite{dh:wdsds} for
the more  general latter case.  A non-stationary  model with non-neutral
mutation  is   also  given  in   \textsc{Abraham},  \textsc{Delmas}  and
\textsc{Hoscheit} \cite{adh:etiltvp},  with a model  of immigration with
(random)  increasing fitness.  In  this model,  the whole  population is
again a CB process.

\subsection{The model}
Our  present model  of  a CB  process  with non-homogeneous  immigration
follows  the  approach  of  \cite{f:psnpgm}  and  \cite{bff:ntb}  as  we
consider  non-neutral mutation  provided by  an immigration  at constant
rate.

For  simplicity, we shall  restrict our-self  to the  quadratic branching
mechanism    $\psi(\lambda)=\beta\lambda^2$,   with    $\beta>0$.    For
$\theta>0$,  let $Y^\theta=(Y^\theta_t,  t\geq 0)$  denote a  CB process
with                         branching                         mechanism
$\psi_\theta(\lambda)=\psi(\lambda+\theta)-\psi(\theta)$.   It  is  well
known  that   $Y^\theta$  is   stochastically  larger  than   $Y^q$  for
$\theta\leq q$.  In particular, we shall say that the type (or mutation)
$\theta$ is  more advantageous  than the type  $q$. We shall  consider a
stationary CBI  $(Z_t, t\in  \R)$ with non-homogeneous  immigration such
that  at rate  $2\beta \,dt$  there is  an immigration  of a  CB process
starting with an infinitesimal mass  and of type $\theta$, with $\theta$
chosen according  to a $\sigma$-finite measure  $\mu(d\theta)$.  We call
$\mu$ the mutation rate  measure.  

In \cite{cd:spstsbp},  the immigration was homogeneous  and the mutation
rate measure was a Dirac mass measure; we shall call this model CBI with
neutral mutations. In the framework of \cite{cd:spstsbp} the process $Z$
is distributed  as the initial CB process  conditioned on non-extinction
(or $Q$-process) under its stationary measure.  The immigration can also
be seen as  the descendants of an immortal  individual. This description
of the genealogy  using an immortal individual is in  the same spirit as
the   bottom   individual  in   the   modified   look-down  process   in
\cite{dk:prmvpm}. Even if this interpretation is  no more valid  in our
setting, we might keep the corresponding vocabulary as MRCA or TMRCA.

The mutation  rate measure allows to consider  non-neutral mutations. In
our  model different  CB  processes with  different branching  mechanism
coexist at the same time  and all mutations eventually die out. Contrary
to \cite{adh:etiltvp}, who  considered only advantageous mutations (that
is advantageous  immigration with rate  proportional to the size  of the
population), the type of the immigrants in our model is random and there
is no improvement of the type as time goes on. One of the advantages of
our model is that it has  a stationary version, which we shall consider.
Notice also  that the  size of the  population is random  (and different
from an homogeneous  CBI unless the mutation rate  measure is a constant
time a Dirac mass).

\subsection{Presentation of the results}

After some preliminaries on CB processes in Section \ref{sec:prelim}, we
define precisely our model  in Section \ref{sec:defZ}. In particular, we
give an integrability  condition on the mutation rate  measure $\mu$ for
the process $Z$ to be  well defined (Theorem \ref{conztfi}) and we check
that  $Z$  is  continuous   (Theorem  \ref{theo:cont-0}).  We  give  the
expectation of  $Z_t$ (Corollary  \ref{cor:EZ}) which might  be infinite
and characterize the mutation rate measure for which the population size
is always  strictly positive (Proposition  \ref{prop:Zt=0}). Notice only
this   case   is   biologically   meaningful.  We   also   give   (Lemma
\ref{lem:TMRCA})  the distribution  of time  to the  most  recent common
ancestor  (TMRCA) which  is seen  as the  first immigration  time  of an
ancestor of the current population living at time $t=0$.

We study the type $\Theta$ of the MRCA in fact that the type of the first 
immigrant  having descendants at time $t=0$ in Section \ref{sec:Q}. 
In particular, we get that if $\mu$ is a probability measure, 
then $\Theta$ is stochastically more favorable than the type of a random
immigrants given by $\mu$ (that is $\Theta$ is stochastically less than
$\Theta'$ with probability measure $\mu$). 

Using arguments close to
\cite{cd:spstsbp} we give  in Section      \ref{sec:bottle}
the distribution of the size $Z_A$ of the population at the
TMRCA (Proposition  \ref{prop:ZA})  and check that the size 
of the population at the  TMRCA  is stochastically  smaller 
than the size of the population at fixed time (which is the stationary measure).
 This can be interpreted as a bottleneck effect. 

 In Section  \ref{sec:N} we give (Lemma  \ref{lem:cvN/L}) the asymptotic
 number  of  immigrants  who  still  have  descendants  in  the  current
 population  at time  $t=0$.   In Section  \ref{sec:geneal},  we give  a
 precise  description of  the genealogical  structure of  the population
 relying on the  tree structure of the Brownian  excursion.  We study in
 Section \ref{sec:M}  the asymptotic number $M_s$ of  ancestors $s$ unit
 of time  in the past of  the current population.  In  particular we get
 (Proposition \ref{prop:cv-as-sM}) that $\beta M_s$ is of order $Z_0/s$,
 which  is   similar  to  the   CBI  case  with  neutral   mutation  see
 \cite{cd:spstsbp}   or  \textsc{Berestycki},   \textsc{Berestycki}  and
 \textsc{Limic}  \cite{bbl:lcscdi} for $\Lambda$-coalescent  models.  We
 also give the fluctuations (Theorem \ref{theo:fluct}) which are similar
 to the neutral case if the mutation rate behaves nicely.

Section \ref{sec:stable-mut} is devoted to the stable mutation rate:
\[
\mu(d\theta) = c  \theta^{\alpha - 1} \, \ind_{\{\theta > 0\}}\, d\theta,
\]
with $c>0$ and $\alpha\in (0,1)$. In this case we have $\rE[Z_0]=+\infty
$  and $\rE[Z_A]$  finite iff  $\alpha\in (1/2,  1)$. In  particular for
$\alpha\in (1/2,1)$  we have  a drastic bottleneck  effect as  the ratio
$\rE[Z_A]/\rE[Z_0]$  is  equal  to  $0$.   We  also  prove  (Proposition
\ref{prop:fitness}) that  the type of the MRCA  is (stochastically) more
advantageous  than the  type of  an individual  taken at  random  in the
current  population, see also  \cite{f:canl} for  similar behavior  in a
different model. We  conjecture this result holds for  any mutation rate
measure.   We  get  in  Section  \ref{sec:Nstable} that  the  number  of
families at $s$ unit of time  in the past for the model with non-neutral
mutations  behave as  $s^{-\alpha}$  and  that this  result  can not  be
compared  to the  neutral  case in  \cite{cd:spstsbp}  even with  stable
branching mechanism where the number of  families at $s$ unit of time in
the past is  of order $1/|\log(s)|$. Concerning the  fluctuations of the
number  of  ancestors, $M_s$,  we  get  that  results given  in  Theorem
\ref{theo:fluct}  holds  iff  $\alpha\in  (0,1/2)$  and  we  get  a
deterministic  limit for  $\alpha\in (1/2,1)$.  We interpret  this
latter phenomenon as a
 law of  large number effect from the  large number of small
populations generated by immigrants with very disadvantageous mutations
which is preponderant to the fluctuations of the number of ancestors in
each of the immigrant populations. 

\section{Preliminaries and notations}
\label{sec:prelim}

We consider a quadratic branching mechanism $\psi(\lam) = \beta \lam^2$
for some fixed $\beta>0$.
We will consider a family $(\pthe, \theta \geq 0)$ of (sub)-critical
branching mechanism defined by:
\[
\pthe(\lam) = \psi(\lam + \theta) - \psi(\theta) = 2 \beta \theta \lam +
\beta \lam^2. 
\]
For every  fixed $\theta\geq 0$, let  $\P_x^{\pthe}$ be the law  of a CB
process, $Y^{\theta}  = (Y_t^{\theta}, t  \geq 0)$, started at  mass $x$
with   branching   mechanism  $\pthe$.    Let   $\E_x^{\pthe}$  be   the
corresponding  expectation  and $\N^{\pthe}$  be  the canonical  measure
(excursion   measure)  associated   to   $Y^{\theta}$.   In   particular
$\N^{\pthe}$ is a $\sigma$-finite measure on the set $D_0$ of continuous
functions from $(0, \infty)$ to $[0, \infty)$ having zero as a trap (for
a function $f$, this means $f(s)=0$ implies $f(t)=0$ for all $t\geq s$).
According    to   \cite{ad:cbmcsbpui},   see    also   \textsc{Abraham},
\textsc{Delmas} and \textsc{Voisin}  \cite{adv:plcrt}, it is possible to
define the  processes $(Y^\theta,  \theta\geq 0)$ on  the same  space so
that a.s. $Y^{\theta_1} \geq  Y^{\theta_2}$, for any $0\leq \theta_1\leq
\theta_2$.

When there is no confusion we shall write $Y$ for $Y^\theta$ that is for
example $ \E^{\psi_\theta}_{x}[F(Y)]$ or $ \N^{\psi_\theta}[F(Y)]$
instead of $ \E^{\psi_\theta}_{x}[F(Y^\theta)]$ or $
\N^{\psi_\theta}[F(Y^\theta)]$. We recall some well known results on
quadratic CB processes. 
For every $t \geq 0$ and $\lam >  -2\theta/(1- \expp{-\theta t})$, we have: 
\begin{equation} 
\label{eq:laplace_csbp}
\E_x^{\pthe}\left[\expp{-\lam Y_t}\right] = \expp{-x \uthe{t}},
\end{equation}
where
\[
\uthe{t} = \N^{\pthe} [1- \expp{-\lam Y_t}]  
= \frac{2 \theta \lam}{(2 \theta + \lam) \ebt - \lam}\cdot
\]
Notice that $u^\theta$ satisfies the backward and forward equations for
$\lambda\geq 0$ and $t\geq 0$:
\[
\partial_t \uthe{t} = -\pthe(\uthe{t}),\quad
\partial_t \uthe{t} = -\pthe(\lam)\,\partial_\lam \uthe{t},
\]
with initial conditions $u^\theta(\lambda,0)=\lambda$ and
$u^\theta(0,t)=0$. 
It is easy to deduce that for $t\geq 0$:
\begin{equation}
   \label{eq:mean}
\N^{\pthe} [Y_t] = \embt \quad\text{and}\quad
\N^{\psi_\theta} \left[Y_t \expp{-\lambda Y_t}\right]
= \frac{\expp{-2\beta\theta t}}{\left(1+ \lambda  \Delta^\theta _t \right)^2},
\end{equation}
where we set for $t\geq 0$:
\begin{equation}
   \label{eq:def-D}
\Delta_t^\theta=\frac{1-\expp{-2\beta\theta t}}{2\theta}\cdot
\end{equation}
It is easy to get that for $t\geq 0$ and $\lambda\geq 0$:
\begin{equation} 
\label{int-u}
\beta \int_0^t \uthe{r}\, dr = \log(1 + \lambda \Delta_t^\theta )
\quad\text{and}\quad
\beta \int_0^\infty \uthe{r}\, dr = \log\left(1 + \frac{\lam}{2
    \theta}\right). 
\end{equation}
Let  
\begin{equation}
   \label{eq:lifetime}
\zeta=\inf\{t>0; Y_t=0\}
\end{equation} 
be the lifetime of $Y$ and set $\cthe{t} =
\N^{\pthe} [\zeta > t] $. Then we have: 
\begin{equation}
   \label{eq:c_q}
\cthe{t} = \lim_{\lam \to \infty} \uthe{t}= \frac{2 \theta}{\ebt - 1}=
\frac{\expp{-2\beta\theta     t}}{\Delta_t^\theta}\cdot
\end{equation} 
Notice that for $t,s>0$, we have $c^\theta(t+s)=u^\theta(c^\theta(t),
s)$. We also have for $t>0$:
\begin{equation} 
\label{int-c}
\beta \int_t^{\infty } c^\theta(r)\, dr  = -\log \left( 1- \expp{-2\beta\theta t}\right). 
\end{equation}

\section{Definition and properties of the total size process}
\label{sec:defZ}

For a Borel measure $\mu$ and a measurable non-negative function $f$ defined
on the same space, we will write $\langle \mu,f \rangle=\int
f(x)\,\mu(dx)$.

Let $\mu$ be a non-zero Borel $\sigma$-finite measure on $(0,+\infty )$,
which we  shall call  a mutation rate  measure.  Consider under  $\rP$ a
Poisson point measure (PPM) on $\R \times (0, \infty) \times D_0$,
$$
\sum_{i \in I} \delta_{(t_i, \theta_i, Y^i)}(dt, d \theta, dY),
$$
with intensity $2 \beta dt  \mu(d \theta) \N^{\pthe}[dY]$.  Let $\rE$ be
the  expectation corresponding  to the  probability measure  $\rP$.  For
$i\in I$,  we shall call $Y^i$  a family, $\theta_i$ its  type (or
mutation) and $t_i$
its birth  time.  Define the super-process $\cz=(\cz_t,  t\in \R)$ by:
$$
\cz_t(d \theta) = \sum_{i\in I} Y_{t - t_i}^i \delta_{\theta_i}(d \theta)
$$
with the convention  that $Y_t^i = 0$ for $t  < 0$ and $\delta_{\theta}$
denotes  the  Dirac  mass  at  $\theta$.  By  construction  $\cz$  is  a
stationary Markovian $\sigma$-finite measure-valued process.  We shall
consider the corresponding total size process $Z=(Z_t, t\in  \R)$
defined by:
\[
Z_t = \langle \cz_t, 1
\rangle  = \sum_{i\in I} Y_{t - t_i}^i = \sum_{t_i<t} Y_{t - t_i}^i . 
\]
Notice that  $Z$  is stationary  but  it is  not
Markovian unless $\mu$ is a constant times a Dirac mass. The process $Z$
is a CB process with a non-homogeneous immigration. It will represent the
evolution of a random size population with non-neutral mutations in a
stationary regime. The genealogy of $Z$ will be defined in Section
\ref{sec:geneal}.

First we will consider  the condition on $\mu$ such that $Z$ is well
defined. 
\begin{theo} \label{conztfi}
Let $t\in \R$. The random variable $Z_t$ is finite a.s. 
if and only if the following conditions are satisfied:
\begin{equation}
\label{eq: con-mu}
\int_{0+} |\log \theta | \,\mu(d \theta) < \infty\quad\mbox{and}
\quad\int^{+\infty} \frac{\mu(d \theta)}{\theta} < \infty.
\end{equation}
The distribution of $Z_t$ is characterized by its Laplace
transform, for $\lambda\geq 0$:
\begin{equation}
   \label{eq:lap-Z}
\rE[\expp{-\lam Z_t}]=\exp\left(-2    \int_0^{\infty}
  \mu(d\theta)\, \log\left(1+\frac{\lambda}{2\theta}\right)\right). 
\end{equation}

\end{theo}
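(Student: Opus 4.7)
The plan is to exploit the Poisson structure of $\{(t_i,\theta_i,Y^i)\}$ via the exponential formula, reducing everything to a deterministic integral that is already evaluated in \reff{int-u}.

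\textbf{Step 1: Laplace transform via the exponential formula.} Since $Y^i_{t-t_i}\geq 0$ for every family and the total $Z_t$ is a measurable non-negative functional of the PPM, Campbell's formula (valid for non-negative integrands even when $Z_t$ may equal $+\infty$) gives, for $\lambda\geq 0$:
\[
\rE\!\left[\expp{-\lambda Z_t}\right]
=\exp\!\left(-\int_\R\! 2\beta\, ds\int_0^\infty\!\mu(d\theta)\,
\N^{\pthe}\!\big[1-\expp{-\lambda Y_{t-s}\ind_{\{s<t\}}}\big]\right).
\]
After the change of variable $r=t-s$ (so $r$ runs over $(0,\infty)$) and using the definition $\uthe{r}=\N^{\pthe}[1-\expp{-\lambda Y_r}]$, the inner $ds$-integral becomes $2\beta\int_0^\infty \uthe{r}\,dr$.

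\textbf{Step 2: Evaluation via \reff{int-u}.} By \reff{int-u}, $\beta\int_0^\infty \uthe{r}\,dr=\log\!\big(1+\lambda/(2\theta)\big)$. Substituting yields
\[
\rE\!\left[\expp{-\lambda Z_t}\right]
=\exp\!\left(-2\int_0^\infty\!\mu(d\theta)\,\log\!\Big(1+\frac{\lambda}{2\theta}\Big)\right),
\]
which is exactly \reff{eq:lap-Z}, valid \emph{a priori} as an equality in $[0,1]$ (interpreting $\expp{-\infty}=0$). The distribution of $Z_t$ is then determined whenever this transform is strictly positive.

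\textbf{Step 3: Dichotomy for finiteness.} Note $Z_t<\infty$ a.s.\ iff $\rE[\expp{-\lambda Z_t}]>0$ for some (equivalently, all) $\lambda>0$, iff the integral
\[
I(\lambda):=\int_0^\infty \log\!\Big(1+\frac{\lambda}{2\theta}\Big)\,\mu(d\theta)
\]
is finite. Split at $\theta=1$. As $\theta\to 0^+$, $\log(1+\lambda/(2\theta))=|\log\theta|+\log(\lambda/2)+o(1)$, so the integral near $0$ is finite iff $\int_{0+}|\log\theta|\,\mu(d\theta)<\infty$. As $\theta\to\infty$, $\log(1+\lambda/(2\theta))\sim \lambda/(2\theta)$, so the integral near $\infty$ is finite iff $\int^{+\infty}\mu(d\theta)/\theta<\infty$. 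Together these give precisely \reff{eq: con-mu}. Conversely, if either condition fails, $I(\lambda)=+\infty$ for every $\lambda>0$, hence $\rE[\expp{-\lambda Z_t}]=0$, forcing $Z_t=+\infty$ a.s.

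The only place requiring mild care is justifying the use of the exponential formula when $Z_t$ may be infinite; this is standard for non-negative functionals of a Poisson point measure, and the dichotomy "$Z_t<\infty$ a.s.\ or $Z_t=+\infty$ a.s." then follows from stationarity together with the fact that the Laplace transform only takes the values $0$ or an expression in $(0,1]$. No serious obstacle is expected — all the analytic content is packaged in the identities \reff{int-u} established in the preliminaries.
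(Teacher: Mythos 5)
Your proof is correct and follows essentially the same route as the paper: exponential formula for the Poisson point measure, evaluation of $\int_0^\infty \uthe{r}\,dr$ via \reff{int-u}, and the asymptotics of $\log(1+\lambda/2\theta)$ at $0$ and $+\infty$ to identify condition \reff{eq: con-mu}. The one point you gloss over — passing from positivity of the Laplace transform to $\rP(Z_t<\infty)=1$ — is handled in the paper by letting $\lambda\to 0$ and using that $I(\lambda)\to 0$ by dominated convergence, which is exactly the missing half-line in your Step 3.
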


\begin{proof}
By the exponential formula, one obtains that, for $F$ non-negative measurable,
$$
\rE\left[\exp\left(-\sum_{i \in I} F(t_i, \theta_i, Y^i)\right)\right]
= \exp\left(-2 \beta \int_0^{\infty} dt \int_0^{\infty} \mu(d\theta)\,
\N^{\pthe}[1- \expp{-F(t, \theta, Y)}]\right).
$$
Since $Z_t = \langle \cz_t, 1 \rangle = \sum_{t_i \le t} Y_{t - t_i}^i$, 
we have using \reff{int-u}:
\begin{align*}
\rE[\expp{-\lam Z_t}]
&=\exp\left(-2 \beta \int_0^{\infty} ds
\int_0^{\infty} \mu(d\theta)\,\N^{\pthe}[1- \expp{-\lam Y_s}]\right)\\
&=\exp\left(-2  \beta \int_0^{\infty} ds \int_0^{\infty}
  \mu(d\theta)\,\uthe{s}\right)\\
&= \exp\left(-2    \int_0^{\infty}
  \mu(d\theta)\, \log\left(1+\frac{\lambda}{2\theta}\right)\right). 
\end{align*}
Letting $\lam \to 0$ entails that
\begin{equation} \label{equizf}
\rP(Z_t < \infty)
= 1 \quad \Leftrightarrow \quad \lim_{\lam \to 0}
\int_0^{\infty}
  \mu(d\theta)\, \log\left(1+\frac{\lambda}{2\theta}\right) = 0.
\end{equation}
The right hand side of \eqref{equizf} is equivalent to the existence of
some $\lam > 0$, such that:
\begin{equation} \label{con-int-u}
\int_0^{\infty}
  \mu(d\theta)\, \log\left(1+\frac{\lambda}{2\theta}\right)< \infty.
\end{equation}
As  $\log(1 + \lambda/2\theta )$ is equivalent to $|\log \theta |$ 
(resp. $\lambda/2\theta$) as $\theta$ goes to $0+$
(resp. $+\infty$), we deduce that \eqref{con-int-u} holds if and only if 
 \eqref{eq: con-mu} holds.
\end{proof}

Before giving other properties of the process $Z$, we shall study the
time $A$ to the first immigration time of an ancestor  (or equivalently
the TMRCA)  of the current
population 
living at time $0$ which is  defined as:
\begin{equation}
   \label{eq:def-A}
A= \sup \{|t_i|; i\in I \text{ and } Y_{-t_i}^i > 0\}= \sup \{|t_i|;
i\in I \text{ and } t_i<0<t_i+ \zeta_i\},
\end{equation}
with $\zeta_i$ the lifetime (see definition \reff{eq:lifetime}) of
$Y^i$. 

\begin{lem}
   \label{lem:TMRCA}
We have for all $t\geq 0$: 
\[
\rP(A < t) =\exp\left(2\int_0^\infty \log(1-\expp{-2 \beta \theta
    t})\,\mu(d \theta)\right).
\]
Under conditions \reff{eq: con-mu}, we get that  $A$ is a.s. finite. 
\end{lem}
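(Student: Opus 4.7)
The plan is to realize $\{A<t\}$ as a void event for the Poisson point measure (PPM) and then apply the exponential formula. More precisely, by the definition \reff{eq:def-A}, we have $A<t$ if and only if no atom $(t_i,\theta_i,Y^i)$ of the PPM satisfies $t_i\le -t$ and $\zeta_i>|t_i|$ (where $\zeta_i$ is the lifetime of $Y^i$ as in \reff{eq:lifetime}). In other words, $A<t$ is the event that the random set
\[
\mathcal{N}_t=\{i\in I:\ t_i\le -t,\ \zeta_i>|t_i|\}
\]
is empty.

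First I would apply the Poisson formula. Since the PPM has intensity $2\beta\,dt\,\mu(d\theta)\,\N^{\pthe}[dY]$, the cardinality $\#\mathcal{N}_t$ is Poisson-distributed with parameter
\[
\Lambda(t)=2\beta\int_t^{\infty}ds\int_0^{\infty}\mu(d\theta)\,\N^{\pthe}[\zeta>s]=2\beta\int_t^{\infty}ds\int_0^{\infty}\mu(d\theta)\,c^{\theta}(s),
\]
using the change of variable $s=-t_i$ and the definition $c^{\theta}(s)=\N^{\pthe}[\zeta>s]$ given just before \reff{eq:c_q}. Hence $\rP(A<t)=\exp(-\Lambda(t))$. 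Fubini (applicable since the integrand is non-negative) and formula \reff{int-c} then give
\[
\Lambda(t)=2\int_0^{\infty}\mu(d\theta)\,\beta\int_t^{\infty}c^{\theta}(s)\,ds=-2\int_0^{\infty}\log(1-\expp{-2\beta\theta t})\,\mu(d\theta),
\]
which yields the claimed formula.

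For the finiteness of $A$, I would show that $\lim_{t\to\infty}\Lambda(t)=0$ under \reff{eq: con-mu}, since $\rP(A<\infty)=\lim_{t\to\infty}\rP(A<t)$. Writing $g_t(\theta)=-\log(1-\expp{-2\beta\theta t})$, one has $g_t(\theta)\downarrow 0$ pointwise as $t\uparrow\infty$, so monotone convergence gives the limit provided $\Lambda(t_0)<\infty$ for some $t_0>0$. To check this, split the integral: near $\theta=0$, $g_{t_0}(\theta)\sim |\log\theta|+|\log(2\beta t_0)|$, so integrability there follows from the first condition in \reff{eq: con-mu}; near $\theta=\infty$, $g_{t_0}(\theta)\sim \expp{-2\beta\theta t_0}=O(1/\theta)$, so integrability there follows from the second condition.

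The only mildly delicate point is the asymptotic analysis of $g_{t_0}(\theta)$ near the endpoints, which identifies exactly the two conditions \reff{eq: con-mu} as what is needed; the rest of the argument is a routine application of the PPM exponential formula and the explicit identity \reff{int-c}.
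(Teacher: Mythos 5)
Your proposal is correct and follows essentially the same route as the paper: both compute $\rP(A<t)$ as the void probability of the Poisson point measure with intensity $2\beta\,ds\,\mu(d\theta)\,\N^{\pthe}[\zeta>s]$ on $\{s\geq t\}$ and then apply \reff{int-c}, and both obtain a.s.\ finiteness of $A$ by letting $t\to\infty$ in the resulting formula. Your verification that the integral $\int_0^\infty \log(1-\expp{-2\beta\theta t})\,\mu(d\theta)$ is finite under \reff{eq: con-mu}, via the endpoint asymptotics at $\theta\to 0+$ and $\theta\to\infty$, simply makes explicit a step the paper asserts without detail.
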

\begin{proof}
   The property of the Poisson random measure implies that for $t\geq
   0$: 
\begin{align*}
\rP(A < t)
&= \rP(\forall\, i \in I, t_i \geq  -t \text{ or } \zeta_i +t_i< 0)\\
&= \exp\left(-2 \beta \int_t^\infty ds \int_0^\infty
  \mu(d\theta)\,\N^{\pthe}[\zeta<s ]\right)\\
&= \exp\left(-2 \beta \int_0^\infty \mu(d\theta) \int_t^\infty \cthe{s}\,ds\right)\\
&= \exp\left(2\int_0^\infty \log(1-\expp{-2 \beta \theta t})\,\mu(d \theta)\right),
\end{align*}
where  we used  \reff{int-c}  for the  last  equality. Under  conditions
\reff{eq:  con-mu},  we get  that  $\int_0^\infty \log(1-\expp{-2  \beta
  \theta t})\,\mu(d \theta)$  is finite for any $t>0$,  which
implies, thanks to 
dominated convergence, that $\lim_{t\rightarrow+\infty } \rP(A<t)=1$ that
is $A$ is a.s. finite.
\end{proof}

\begin{theo}
   \label{theo:cont-0}
Under conditions \reff{eq: con-mu}, the process $Z$ is continuous. 
\end{theo}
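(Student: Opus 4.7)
The strategy is to exploit the Poisson point measure representation of $Z$ and to separate excursions according to their lifetime $\zeta_i$: long-lived excursions form a finite sum of continuous paths on any compact, while short-lived ones produce a uniformly small residual. By the stationarity of $Z$, it suffices to prove pathwise continuity on each compact interval $[-T, T]$. Fix $\epsilon > 0$ and write
\[
Z_t = Z^\epsilon_t + R^\epsilon_t, \quad Z^\epsilon_t = \sum_{i \in I:\, \zeta_i > \epsilon} Y^i_{t-t_i}, \quad R^\epsilon_t = \sum_{i \in I:\, \zeta_i \leq \epsilon} Y^i_{t-t_i}.
\]

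For $Z^\epsilon$, Campbell's formula bounds the expected number of indices $i$ with $\zeta_i > \epsilon$ whose excursion meets $[-T, T]$ by
\[
2\beta \int_0^\infty \mu(d\theta) \left( (2T+\epsilon)\, c^\theta(\epsilon) + \int_\epsilon^\infty c^\theta(s)\, ds \right),
\]
which is finite under \reff{eq: con-mu} by using \reff{int-c} together with $c^\theta(\epsilon) \sim 1/(\beta\epsilon)$ and $\int_\epsilon^\infty c^\theta(s)\, ds \sim |\log\theta|/\beta$ as $\theta \to 0^+$, plus exponential decay as $\theta \to +\infty$. Since each excursion $Y^i$ is continuous (quadratic branching) and only finitely many contribute, $Z^\epsilon|_{[-T, T]}$ is a.s.\ a finite sum of continuous functions, hence continuous. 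It then remains to prove that $\sup_{t \in [-T,T]} R^\epsilon_t \to 0$ in probability as $\epsilon \to 0$; extracting a subsequence along which the convergence is almost sure and uniform gives that $Z$ is the uniform limit of continuous processes on $[-T, T]$, hence continuous.

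For the sup-smallness, the natural bound is
\[
\sup_{t \in [-T, T]} R^\epsilon_t \leq \sum_{i:\, \zeta_i \leq \epsilon,\, t_i \in [-T-\epsilon, T]} M_i, \qquad M_i := \sup_s Y^i_s,
\]
whose expectation equals $2\beta(2T+\epsilon) \int_0^\infty \mu(d\theta)\, \N^{\pthe}[M\,\ind_{\{\zeta \leq \epsilon\}}]$; the scale-function computation $\N^{\pthe}[M > x] = 2\theta/(e^{2\theta x}-1)$ for the Feller diffusion $Y^\theta$, combined with the joint distribution of $(M, \zeta)$ under $\N^{\pthe}$, should then give a bound integrable against $\mu$, with $\N^{\pthe}[M\,\ind_{\{\zeta \leq \epsilon\}}] \downarrow 0$ by monotone convergence. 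The main obstacle is precisely this uniform-in-$\theta$ estimate of $\N^{\pthe}[M\,\ind_{\{\zeta \leq \epsilon\}}]$, since $\N^{\pthe}[M] = +\infty$ prevents dropping the indicator and the joint law of $(M, \zeta)$ truly enters. If this moment approach proves delicate, a fallback is to instead control $\rE[\int_{-T}^T R^\epsilon_t\, dt]$ via the exponential formula (reducing to the closed-form quantity $\N^{\pthe}[\int_0^\zeta Y_s\, ds\, \ind_{\{\zeta \leq \epsilon\}}]$ computable from \reff{eq:mean} and \reff{eq:c_q}) and then convert $L^1$-smallness into uniform smallness by exploiting the pointwise monotonicity $R^{\epsilon'} \leq R^\epsilon$ for $\epsilon' \leq \epsilon$.
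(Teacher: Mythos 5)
Your decomposition by lifetime is a reasonable instinct, and the first half is fine: the expected number of indices with $\zeta_i>\epsilon$ whose excursion meets $[-T,T]$ is indeed finite under \reff{eq: con-mu}, so $Z^\epsilon$ is a.s.\ a finite sum of continuous paths. The gap is in the residual, and it is fatal to the route as you describe it. Your ``natural bound'' $\sup_t R^\epsilon_t\leq \sum_i M_i$ is degenerate: one has $\N^{\pthe}\big[M\,\ind_{\{\zeta\leq \epsilon\}}\big]=+\infty$ for every $\theta,\epsilon>0$. Indeed, from $\N^{\pthe}[M>x]=2\theta/(\expp{2\theta x}-1)$ and $\N^{\pthe}[M>x,\zeta\leq\epsilon]\geq \N^{\pthe}[M>x]-c^\theta(\epsilon)$ one gets
\[
\int_0^{x_0}\N^{\pthe}[M>x,\zeta\leq\epsilon]\,dx\;\geq\;\int_0^{x_0}\frac{2\theta\,dx}{\expp{2\theta x}-1}-x_0\,c^\theta(\epsilon)=+\infty ,
\]
because of the $1/x$ singularity at $x=0$. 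Worse, the same estimate shows $\int_0^1\N^{\pthe}[M>x,\zeta\leq\epsilon]\,dx=+\infty$, so by the standard criterion for Poisson sums the random variable $\sum_{i}M_i\ind_{\{\zeta_i\leq\epsilon,\,t_i\in[-T-\epsilon,T]\}}$ is not merely of infinite mean but a.s.\ equal to $+\infty$: the short excursions are too numerous, and their suprema, attained at different times, cannot be summed. The fallback is also not viable: controlling $\rE[\int_{-T}^T R^\epsilon_t\,dt]$ gives only that $R^\epsilon\downarrow 0$ pointwise (which is anyway immediate), and upgrading monotone pointwise convergence to uniform convergence via a Dini-type argument requires the continuity of each $R^\epsilon=Z-Z^\epsilon$ --- which, $Z^\epsilon$ being continuous, is exactly the continuity of $Z$ you are trying to prove. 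Any successful treatment of the residual must exploit the fact that the small bumps do not peak simultaneously, i.e.\ it must see increments rather than suprema of individual terms.

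This is precisely what the paper does, by a different truncation and a different criterion: it cuts in birth time rather than lifetime, setting $Z^c_t=\sum_{i}Y^i_{t-t_i}\ind_{\{t_i\geq -c\}}$, which agrees with $Z$ on $[0,1]$ on the event $\{A\leq c\}$ (with $A$ the a.s.\ finite TMRCA of Lemma \ref{lem:TMRCA}), and then verifies Kolmogorov's continuity criterion for $Z^c$: from the explicit joint Laplace transform of $(Z^c_s,Z^c_t)$, analytic continuation in a neighborhood of $0$ yields $\rE[(Z^c_t-Z^c_s)^4]\leq c_3|t-s|^2$ under \reff{eq: con-mu}. If you want to keep your decomposition, you would have to replace the sup-of-maxima bound by a fourth-moment (or chaining) estimate on the increments of $R^\epsilon$ --- at which point you are essentially redoing the Kolmogorov argument, so you may as well apply it directly to the truncated process.
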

In particular, we deduce that under conditions \reff{eq: con-mu}, $\cz$
is a stationary Markov process with values in the set of finite measures
on $\R_+$. 

\begin{proof}
To  prove  the  continuity  of  the  process  $Z$,  we  notice  that  by
stationarity,  we  just   need  to  prove  the  continuity   of  $Z$  on
$[0,1]$.  Let  $c>0$ be  a  finite  constant  and consider  the
truncated process
$Z^c=(Z^c_t, t\in [0,1])$ defined by:
\[
Z^c_t= \sum_{i\in I} Y_{t - t_i}^i \ind_{\{ t_i\geq - c\}}. 
\]
Notice that $Z^c$ and $(Z_t, t\in [0,1])$ coincide on $\{A\leq
c\}$. Since $A$ is a.s. finite, to get the continuity of $Z$ on $[0,1]$,
we just need to prove that $Z^c$ is continuous. We shall check the
Kolmogorov criterion for $Z^c$. 

Let $\lambda\geq 0$ and $\gamma\geq 0$, $0\leq  s\leq t\leq 1$. 
We have:
\begin{align*}
   \rE\left[\expp{-\lambda Z^c_t -\gamma Z^c_s}\right]
&= \rE\left[\expp{-\sum_{-c\leq t_i \leq s}(\lam Y_{t - t_i}^i +\gamma
    Y_{s - t_i}^i) } \right]
\rE\left[\expp{-\lam \sum_{s < t_i \leq t}Y_{t - t_i}^i }\right]\\
&= \expp{-2 \beta \int_{-c}^s dr \int_0^\infty \mu(d\theta) 
\N^{\pthe}[1 - \expp{-\lam Y_{t - r} -\gamma  Y_{s - r}}]}
\expp{-2 \beta \int_s^t dr \int_0^\infty \mu(d\theta) \N^{\pthe}[1 -
  \expp{-\lam Y_{t - r}}]}\\ 
&= \expp{-2 \beta \int_0^{c+s} dr \int_0^\infty \mu(d\theta) 
u^\theta (u^\theta (\lam, t - s) +\gamma, r)} 
\expp{-2 \beta \int_0^{t - s} dr \int_0^\infty \mu(d\theta) u^{\theta}(\lam, r)}\\
&= \exp{-2 \int_0^\infty \mu(d\theta) \left(
\log(1+(u^\theta (\lam, t - s) +\gamma) \Delta^\theta_{c+s}) 
+ \log(1+\lambda \Delta^\theta_{t-s} )
\right)}\\
&= \exp{-2 \int_0^\infty \mu(d\theta) 
\log\left(1+\lambda \Delta^\theta_{t-s}+ (\lambda(1- 2\theta
\Delta^\theta_{t-s}) +\gamma(1+\lambda  \Delta^\theta_{t-s}) )
\Delta^\theta_{c+s}\right)}, 
\end{align*}
where we used \reff{int-u} for the fourth equality, 
and the equality $u^\theta (\lam, t - s)= \lambda (1-2\theta
\Delta^\theta_{t-s})/(1+\lambda  \Delta^\theta_{t-s}) $ (see 
\reff{eq:def-D}) for
the fifth. Notice that for fixed $r>0$, there exists a constant $C_r>0$
such that for all $\theta>0$:
\begin{equation}
   \label{eq:Dcr}
0\leq \Delta^\theta_r\leq \frac{C_r}{\theta+1}
\quad\text{and recall}\quad
 1-2\theta \Delta^\theta_r= \expp{-2\beta \theta r}.
\end{equation}
Therefore,  there exists a constant $c_1\geq 1$ such that for $\lambda,\gamma\in \R$:
\[
\val{\lambda \Delta^\theta_{t-s}+ (\lambda(1- 2\theta
\Delta^\theta_{t-s}) +\gamma(1+\lambda  \Delta^\theta_{t-s}) )
\Delta^\theta_{c+s}}
\leq  \frac{c_1}{1+\theta} (\val{\lambda}+\val{\gamma}+\val{\lambda\gamma}).
\]
We deduce that under conditions \reff{eq: con-mu}, the
function
\[
(\lambda,\gamma) \mapsto \int_0^\infty \mu(d\theta) 
\log\left(1+\lambda \Delta^\theta_{t-s}+ (\lambda(1- 2\theta
\Delta^\theta_{t-s}) +\gamma(1+\lambda  \Delta^\theta_{t-s}) )
\Delta^\theta_{c+s}\right)
\]
is analytic in  $(\lambda,\gamma)$ in a neighborhood of $0$ for example
on $\{(\lambda,\gamma); \val{\lambda}+\val{\gamma}\leq 1/4c_1\}$. Taking
$\gamma=-\lambda$, this
implies that for $\val{\lambda}\leq 1/8c_1$, we have:
\[
   \rE\left[\expp{-\lambda (Z^c_t -Z^c_s)}\right]
= \exp{-2 \int_0^\infty \mu(d\theta) 
\log\left(1+\lambda \Delta^\theta_{t-s}(1-  2\theta\Delta^\theta_{c+s})
  - \lambda^2 
\Delta^\theta_{t-s}
\Delta^\theta_{c+s}\right)}. 
\]
Using \reff{eq:Dcr}, an easy
computation yields that there exists a constant $c_2$ such that: 
\begin{multline*}
\rE\left[(Z^c_t -Z^c_s)^4\right]\\
\leq  c_2\left( \left(\int_0^\infty \mu(d\theta)
    \Delta^\theta_{t-s}\expp{-2\beta\theta(c+s)}\right)^4
    +\left(\int_0^\infty \frac{\mu(d\theta)}{1+\theta}
    \Delta^\theta_{t-s}\right)^2+\int_0^\infty \frac{\mu(d\theta)}{1+\theta}
     \left(\Delta^\theta_{t-s}\right)^2\right) .
\end{multline*}
Then using that $\val{ \Delta^\theta_{t-s}}\leq  \beta(t-s)$, 
we get there exists a constant $c_3$ such that: 
\[
\rE\left[(Z^c_t -Z^c_s)^4\right]\leq c_3 \val{t-s}^2.
\]
This gives the Kolmogorov criterion for $Z^c$. Thus $Z^c$ is continuous,
which ends the proof.
\end{proof}

We give the first moment of $Z$. 

\begin{cor}
\label{cor:EZ}
Under conditions \reff{eq: con-mu}, we have for $t\in \R$:
\begin{equation} \label{expe-zt}
\rE [Z_t] = \int_0^\infty \frac{\mu(d \theta)}{\theta} \in [0,\infty].
\end{equation}
\end{cor}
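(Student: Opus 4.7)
The plan is to compute $\rE[Z_t]$ directly from the Poisson point measure description of the process, which is cleanest because it handles the possibly infinite value without any integrability issue. By stationarity it is enough to treat $t=0$ (or any fixed $t$), and by construction
\[
Z_t = \sum_{i\in I} Y^i_{t - t_i} \ind_{\{t_i < t\}},
\]
where the point measure has intensity $2\beta\, dt\,\mu(d\theta)\, \N^{\psi_\theta}[dY]$. Since the summand is non-negative, Campbell's formula (the first-moment identity for Poisson point measures) applies with no restriction and yields
\[
\rE[Z_t] \;=\; 2\beta \int_{-\infty}^{t} ds \int_0^\infty \mu(d\theta)\; \N^{\psi_\theta}[Y_{t-s}]
\;=\; 2\beta \int_0^\infty du \int_0^\infty \mu(d\theta)\; \N^{\psi_\theta}[Y_u],
\]
both sides being elements of $[0,\infty]$.

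The next step is to insert the first-moment identity $\N^{\psi_\theta}[Y_u] = \expp{-2\beta\theta u}$ from \reff{eq:mean} and apply Fubini (permitted since the integrand is non-negative):
\[
\rE[Z_t] \;=\; 2\beta \int_0^\infty \mu(d\theta) \int_0^\infty \expp{-2\beta\theta u}\, du
\;=\; \int_0^\infty \frac{\mu(d\theta)}{\theta},
\]
which is the claimed identity. Note that \reff{eq: con-mu} ensures $Z_t<\infty$ a.s.\ but does \emph{not} force the right-hand side to be finite, which is why the corollary states the value may be $+\infty$.

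As a sanity check one may instead differentiate the Laplace transform \reff{eq:lap-Z} at $\lambda=0^+$: setting $\phi(\lambda)=2\int_0^\infty \mu(d\theta)\,\log(1+\lambda/(2\theta))$ we get $\phi'(\lambda)=\int_0^\infty 2\mu(d\theta)/(2\theta+\lambda)$, and monotone convergence gives $\phi'(0^+)=\int_0^\infty \mu(d\theta)/\theta$; combined with $\rE[Z_t]=\lim_{\lambda\downarrow 0}(1-\rE[\expp{-\lambda Z_t}])/\lambda=\lim_{\lambda\downarrow 0}\phi(\lambda)/\lambda$ (using $\phi(0^+)=0$ from the proof of Theorem \ref{conztfi}), one recovers the same value. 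There is no real obstacle here; the only thing to watch is to keep the computation inside $[0,\infty]$ rather than presuppose finiteness.
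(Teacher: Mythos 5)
Your proof is correct and follows essentially the same route as the paper: the paper's displayed computation is exactly the Campbell-formula identity $\rE[Z_t]=2\beta\int_0^\infty ds\int_0^\infty \mu(d\theta)\,\N^{\psi_\theta}[Y_s]$ followed by \reff{eq:mean} and Fubini, and your Laplace-transform sanity check corresponds to the paper's citation of \reff{eq:lap-Z}. Your explicit care in keeping the computation valued in $[0,\infty]$ is a welcome clarification but not a different argument.
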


\begin{proof}
Using \reff{eq:lap-Z}, we get:
\[
\rE [Z_t] 
= 2 \beta \int_0^\infty ds \int_0^\infty \mu(d\theta)\,\N^{\pthe}[Y_s]
= 2 \beta \int_0^\infty ds \int_0^\infty \mu(d\theta)\,\expp{-2 \beta \theta s}
= \int_0^\infty \frac{\mu(d \theta)}{\theta} \cdot
\]
\end{proof}

We give  a criterion for $Z$ to  reach $0$. 
See also \textsc{Foucart} and \textsc{Bravo} \cite{fb:lecsbpi} for such
a criterion for CBI.

\begin{prop}
\label{prop:Zt=0}
  Under  conditions  \reff{eq:  con-mu},  we  have $\{t;  Z_t  =  0\}  =
  \emptyset$ a.s. if and only if
\begin{equation}
   \label{eq:Z=0}
\int_0^1 dt\, \exp\left(-2 \int_0^\infty \log(1 - \embt)\,\mu(d
  \theta)\right)  = \infty.
\end{equation}
In particular, $\{t;  Z_t  =  0\}  =
  \emptyset$ a.s. if $\langle \mu, 1 \rangle>1/2$ and with
  probability strictly positive $\{t;  Z_t  =  0\}  \neq 
  \emptyset$ if $\langle \mu,1 \rangle<1/2$. 
\end{prop}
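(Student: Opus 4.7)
The plan is to reformulate the condition $\{t;Z_t=0\}=\emptyset$ as a random covering problem on $\R$ and invoke the stationary Shepp covering theorem. Since $Z_t=\sum_{i\in I} Y^i_{t-t_i}$ and each family $Y^i$ is supported on $[t_i,t_i+\zeta_i)$, one has $Z_t=0$ if and only if no family is alive at time $t$, that is,
\[
\{t\in\R;\ Z_t=0\}=\R\setminus\bigcup_{i\in I}(t_i,t_i+\zeta_i).
\]
Thus $\{t;Z_t=0\}=\emptyset$ a.s.\ is precisely the event that the random intervals $\{(t_i,t_i+\zeta_i)\}_{i\in I}$ cover $\R$ almost surely.

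Projecting the PPM onto its lifetime marginal $(t_i,\zeta_i)$ yields a Poisson point measure on $\R\times(0,\infty)$ with intensity $2\beta\,dt\,\nu(d\zeta)$, where by~\reff{eq:c_q}
\[
\nu((s,\infty))=\int_0^\infty c^\theta(s)\,\mu(d\theta), \quad s>0.
\]
Under~\reff{eq: con-mu}, using~\reff{int-c}, $\nu$ is a Radon measure on $(0,\infty)$ and $\int_1^\infty \nu((s,\infty))\,ds<\infty$. By the stationary Shepp covering theorem (for Radon $\nu$; see e.g.\ Kahane, \emph{Some Random Series of Functions}, Ch.~XVII) the random intervals cover $\R$ almost surely if and only if
\[
\int_0^1\exp\Bigl(2\beta\int_t^1\nu((s,\infty))\,ds\Bigr)\,dt=\infty.
\]
A further application of~\reff{int-c} gives $2\beta\int_t^\infty\nu((s,\infty))\,ds=-2\int_0^\infty\log(1-\embt)\,\mu(d\theta)$, and since $2\beta\int_1^\infty\nu((s,\infty))\,ds$ is a finite additive constant in the exponent, the Shepp criterion is equivalent to~\reff{eq:Z=0}.

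For the dichotomy in terms of $\langle\mu,1\rangle$, analyze the integrand of~\reff{eq:Z=0} as $t\downarrow 0$. The elementary bounds $x/2\le 1-\mathrm{e}^{-x}\le x$ on $x\in[0,1]$ give, for $\theta\le 1/(2\beta t)$, the estimate $\bigl|-\log(1-\embt)-|\log(2\beta\theta t)|\bigr|\le\log 2$, while for $\theta>1/(2\beta t)$ we have $-\log(1-\embt)\le-\log(1-1/\mathrm{e})$. Integrating against~$\mu$ and using~\reff{eq: con-mu}, one obtains
\[
-\int_0^\infty\log(1-\embt)\,\mu(d\theta)=\langle\mu,1\rangle\,|\log t|+O(1)\quad\text{as }t\downarrow 0,
\]
the right-hand side read as $+\infty$ when $\langle\mu,1\rangle=+\infty$. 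Consequently the integrand of~\reff{eq:Z=0} is of order $t^{-2\langle\mu,1\rangle}$ up to a bounded factor, so the integral diverges when $\langle\mu,1\rangle>1/2$ (yielding $\{t;Z_t=0\}=\emptyset$ a.s.), while it converges when $\langle\mu,1\rangle<1/2$ (so $\{t;Z_t=0\}\neq\emptyset$ with positive probability).

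The main technical hurdle is ensuring that Shepp's covering theorem applies to our $\sigma$-finite $\nu$, which is Radon on $(0,\infty)$ but infinite near $0$ (reflecting the infinitely many infinitesimal families alive at any given time). The general form of the theorem in Kahane's monograph admits arbitrary Radon $\nu$ provided only the tail integrability $\int_1^\infty\nu((s,\infty))\,ds<\infty$, and~\reff{eq: con-mu} is exactly what furnishes this. A secondary care-point is the asymptotic step: one must separate the contributions from $\theta\le 1/(2\beta t)$ and $\theta>1/(2\beta t)$ and use~\reff{eq: con-mu} to absorb the $O(1)$ remainders so that the leading term $\langle\mu,1\rangle|\log t|$ controls the exponent.
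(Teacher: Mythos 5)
Your proof is correct and follows essentially the same route as the paper: the reduction of $\{t;Z_t=0\}=\emptyset$ to the a.s.\ covering of $\R$ by the intervals $(t_i,t_i+\zeta_i)$ together with a Shepp-type covering criterion (the paper invokes Theorem~2 of Fitzsimmons--Fristedt--Shepp for precisely this step), followed by elementary estimates on $-\int_0^\infty\log(1-\embt)\,\mu(d\theta)$ as $t\downarrow 0$. One small caveat: the two-sided expansion $\langle\mu,1\rangle\,|\log t|+O(1)$ is slightly too strong when $\mu$ is infinite or heavy-tailed at $+\infty$ (the lower-side error coming from large $\theta$ need not stay bounded), but this is harmless here because for the divergence direction one may simply restrict the $\theta$-integral to $(0,\theta_0]$ with $\mu((0,\theta_0])>1/2$ --- every summand $-\log(1-\embt)$ being nonnegative --- which is exactly the paper's argument, while the $O(1)$ upper bound you need for convergence when $\langle\mu,1\rangle<1/2$ is valid as stated since $\mu$ is then a finite measure.
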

\begin{proof}
   Recall that $\zeta_i$ is the lifetime of $Y^i$. By using Theorem 2 in
\textsc{Fitzsimmons}, \textsc{Fristedt} and \textsc{Shepp}   \cite{ffs:srnlurci}, 
we can derive that $\{t; Z_t = 0\} = \emptyset$ a.s. if and only if:
\[
\int_0^1 \exp\left(2 \beta\int_t^\infty ds \int_0^\infty \mu(d
  \theta) \N^{\psi_\theta} [\zeta>s] \right) dt = \infty. 
\]
Thanks to \reff{eq:c_q} and \reff{int-c}, 
this last condition is equivalent to \reff{eq:Z=0}. 

If $\langle \mu, 1 \rangle>1/2$, then there exists $\theta_0\in
(0,+\infty )$ such that 
$B=\int_0^{\theta_0}  \mu(d\theta)>1/2$. Then, we have:
\[
-2 \int_0^\infty \log(1 - \embt)\,\mu(d
  \theta)\geq 
-2 \int_0^{\theta_0} \log(1 - \embt)\,\mu(d
  \theta)
\geq  -2 B \log (1-\expp{-2\beta\theta_0 t}) .
\]
As $2B>1$, we deduce:
\[
   \int_0^1 dt \, \exp\left(-2 \int_0^\infty \log(1 - \embt)\,\mu(d
  \theta)\right) 
 \geq \int_0^1 (1-\expp{-2\beta\theta_0 t}) ^{-2B} \, dt=+\infty . 
\]
Thus a.s. $\{t;  Z_t  =  0\}  =
  \emptyset$.

If $\langle \mu, 1 \rangle<1/2$, then, as $1-\expp{-x} \geq x/2$
for $x\in [0,1]$,  we have for $t\in (0, 1/2\beta]$:
\begin{align*}
   -2 \int_0^\infty \log(1 - \embt)\,\mu(d
  \theta)
&\leq   -2 \int_0^1 \log(1 - \embt)\,\mu(d
  \theta)  -2  \log (1-\expp{-2\beta t}) \int_1^\infty \mu(d\theta)\\
&\leq   -2 \int_0^1 \log(\beta\theta t)\,\mu(d
  \theta)  -2  \log (1-\expp{-2\beta t}) \int_1^\infty \mu(d\theta)\\
&= C -2\log (t) \int_0^1 \mu(d\theta) -2  \log (1-\expp{-2\beta t})
\int_1^\infty \mu(d\theta),
\end{align*}
where $C$ is a finite constant thanks to \reff{eq: con-mu}. We deduce
that for $\varepsilon>0$ small enough:
\[
 \int_0^\varepsilon dt \, \exp\left(-2 \int_0^\infty \log(1 - \embt)\,\mu(d
  \theta)\right) 
\leq \int_0^\varepsilon  dt\, t^{-2 \int_0^1 \mu(d\theta)} 
(1-\expp{-2\beta t})^{ -2  \int_1^\infty \mu(d\theta)}   \expp{C}<+\infty ,
\]
as $\langle \mu,1  \rangle<1/2$. 
This implies that with strictly positive probability  $\{t;  Z_t  =  0\}
\neq 
  \emptyset$.
\end{proof}

\section{Type of the MRCA}
\label{sec:Q}
\textbf{We  assume that conditions \reff{eq: con-mu} hold.}

Because of the stationarity, we shall focus on the MRCA 
of the current population living at time $0$. Recall the TMRCA
is given by \reff{eq:def-A}. We set $i_0\in I$ the (unique) index $i$ such that
$A=-t_i$. We shall say that $Y^{i_0}$ is the oldest family. 
We define the type of the MRCA that is of the oldest immigrant family as:
\[
\Theta=\theta_{i_0}.
\]
We give the joint distribution of the TMRCA  and the type of the MRCA. 

\begin{lem} 
We have for every $t \in \R, \theta > 0$,  
\begin{equation} \label{eq:joint-athe}
\rP (A \in dt, \Theta \in d \theta)
=  \frac{4\beta  \theta}{\expp{2 \beta \theta t} - 1}
\exp\left(2 \int_0^\infty \log(1 - \expp{-2 \beta \theta'
    t})\mu(d\theta')\right)\,dt \mu(d\theta). 
\end{equation}
\end{lem}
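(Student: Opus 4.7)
The plan is to apply the Palm (Slivnyak--Mecke) formula for the Poisson point measure $\sum_{i\in I}\delta_{(t_i,\theta_i,Y^i)}$. This is natural because $(A,\Theta)$ is defined by singling out the atom $i_0$ that realizes the maximum in \reff{eq:def-A}, and such ``one-atom with a maximality property'' functionals are exactly what Mecke's formula is designed to handle.

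First, I would write the joint law in test-function form: for any non-negative measurable $F$ on $(0,\infty)\times (0,\infty)$,
\[
\rE[F(A,\Theta)]
= \rE\left[\sum_{i\in I} F(|t_i|,\theta_i)\,\ind_{\{Y^i_{|t_i|}>0\}}\,\ind_{G_i}\right],
\]
where $G_i=\{\forall\,j\in I\setminus\{i\},\ t_j>t_i\text{ or }Y^j_{|t_j|}=0\}$ encodes ``no family born strictly before $i$ still exists at time $0$''. The sum reduces to $F(A,\Theta)$ because under \reff{eq: con-mu} the distribution of $A$ in Lemma \ref{lem:TMRCA} is continuous in $t$, so the maximizing atom is a.s. unique.

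Next, I would apply Mecke's formula to the PPM with intensity $2\beta\, ds\,\mu(d\theta)\,\N^{\psi_\theta}[dY]$. Since $G_i$ depends only on atoms $j\neq i$, the Palm identity turns the sum into
\[
\int_{-\infty}^0 2\beta\, ds\int_0^\infty \mu(d\theta)\int \N^{\psi_\theta}[dY]\,
F(|s|,\theta)\,\ind_{\{Y_{|s|}>0\}}\,\rP\bigl(\forall\,j\in I,\ t_j\geq s\text{ or }Y^j_{|t_j|}=0\bigr).
\]
Setting $t=|s|>0$, the probability factor is exactly $\rP(A<t)$ from Lemma \ref{lem:TMRCA}, while $\int \N^{\psi_\theta}[dY]\,\ind_{\{Y_t>0\}}=\N^{\psi_\theta}[\zeta>t]=c^\theta(t)=2\theta/(\expp{2\beta\theta t}-1)$ by \reff{eq:lifetime} and \reff{eq:c_q}. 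Reading off the density from the resulting integral against $F$ gives \reff{eq:joint-athe}.

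The only step that requires care is the Palm manipulation: one must justify that under the Palm transformation at the atom carrying $A$, the remaining atoms keep their original Poisson law (standard for PPMs), that the event $G_i$ indeed coincides with $\{A<t\}$ on the ``other atoms'' (which is clear once one notices this event depends only on atoms with $t_j<s$), and that the maximizing atom is almost surely unique so that the sum collapses. Once these points are settled, the rest is bookkeeping using \reff{eq:c_q} and Lemma \ref{lem:TMRCA}.
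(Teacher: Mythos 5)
Your proposal is correct and follows essentially the same route as the paper: write $\rE[F(A,\Theta)]$ as a sum over atoms carrying the indicator that the atom survives to time $0$ while no earlier-born atom does, apply the Palm/Mecke formula so that the independence of the Poisson measure on disjoint time intervals factors out $\rP(A<t)$, and identify $\N^{\psi_\theta}[Y_t>0]=c^\theta(t)=2\theta/(\expp{2\beta\theta t}-1)$ before invoking Lemma \ref{lem:TMRCA}. The extra care you take about a.s.\ uniqueness of the maximizing atom is a sound (if tacit in the paper) justification.
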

\begin{proof}
For $f$ non-negative measurable, 
we  get:
\begin{align*}
\rE [f(A, \Theta)]
&= \rE\bigg[\sum_{i\in I} f(-t_i, \theta_i)\,
\ind_{\big\{Y_{-t_i}^i > 0, \sum_{t_j < t_i}\ind_{\{Y_{-t_j}^j > 0\}} = 0\big\}}\bigg]\\
&= 2 \beta \int_0^\infty ds \int_0^\infty\mu(d\theta)\,
f(s, \theta)\,\N^{\pthe}[Y_s > 0]\,\rP(A < s)\\
&= 2 \beta \int_0^\infty ds \int_0^\infty\mu(d\theta)\,f(s, \theta)\,\cthe{s}\,\rP(A < s).
\end{align*}
We deduce that $\rP (A \in dt, \Theta \in d \theta)= 2 \beta
\cthe{t}\,\rP(A < t)\, dt\mu(d\theta)$.
Then, using Lemma \ref{lem:TMRCA},  it follows that:
\begin{equation*}
\rP (A \in dt, \Theta \in d \theta)
= 2 \beta\,dt \mu(d\theta) \frac{2 \theta}{\expp{2 \beta \theta t} - 1}
\exp\left(2 \int_0^\infty \log(1 - \expp{-2 \beta \theta' t})\mu(d\theta')\right).
\end{equation*}
\end{proof}

Using Lemma \ref{lem:TMRCA}, we can derive the distribution
$\mu_t^{\text{MRCA}}$ of the type
of MRCA given the TMRCA being equal to $t$:
\[
\mu_t^{\text{MRCA}}(\Theta \in d \theta) 
= \rP(\Theta \in d \theta| A \in dt)
= \frac{\theta(\ebt - 1)^{-1}}
{\int_0^\infty\theta'(\expp{2 \beta \theta' t}-1) ^{-1} \,  \mu(d\theta')}
  \, \mu(d\theta).
\]
Notice  that  the  function   $\theta  \mapsto  \theta(\ebt  -  1)^{-1}$
decreases  to 0 as  $\theta$ increases  to $+\infty$.   Intuitively, the
distribution of  the type  of the MRCA  is more  likely to focus  on the
favorable $\theta$  (that is $\theta$  small which corresponds  to large
population) than that on the  $\theta$ large (which corresponds to small
population).   In particular  if  $\mu$ is  a  probability measure  then
$\mu_t^{\text{MRCA}}$ is stochastically  smaller that $\mu$.  This means
the type  of the  MRCA (given $\{A=t\}$)  is stochastically  less, which
means  stochastically  more  favorable,   than  the  type  of  a  random
immigrant.  Notice  also  that $\mu_t^{\text{MRCA}}$  is  stochastically
decreasing with  $t$ which means  that the oldest family,  $Y^{i_0}$, is
stochastically increasing with $|t_{i_0}|$.

\section{Bottleneck effect}
\label{sec:bottle}
\textbf{We  assume that conditions \reff{eq: con-mu} hold.}

We consider $Z_{-A}$, which we shall denote $Z^A$, 
the size of the population at the TMRCA: 
\[
Z^A = Z_{-A}=\sum_{i\in I} Y_{-A - t_i}^i=\sum_{t_i < -A} Y_{-A - t_i}^i.
\]
Let $Z^O= Y_{-t_{i_0}}$ be the size of the old family at time $0$ and 
 $Z^I = Z_0 - Z^O$ be  the size  of the population at time
$0$ not belonging to the old family. Following Theorem 4.1 in \cite{cd:spstsbp}, 
it is easy to get the following result. 

\begin{lem} \label{lem:jd}
The joint distribution of $(Z^A, Z^O, Z^I, A, \Theta)$ is
characterized by:
for $\lam, \gamma, \eta \in [0, +\infty )$, and $t, \theta \in
(0,+\infty ) $,
\begin{multline*}
\rE[\exp(-\lambda Z^A-\gamma Z^I -\eta Z^O); A \in dt, \Theta \in d \theta]\\
= 2 \beta\,dt \mu(d\theta)\,\, (\cthe t - u^\theta(\eta, t))
 \exp\left(-2 \beta \int_0^t ds \int_0^\infty u^{\theta'}(\gamma,
  s)\,\mu(d\theta') \right)\\
\exp\left(
-2 \beta \int_0^\infty ds \int_0^\infty u^{\theta'}(\lam + c^{\theta'}(t), s)\,\mu(d\theta')\right).
\end{multline*}
\end{lem}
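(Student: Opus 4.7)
The plan is to adapt the argument of Theorem 4.1 in \cite{cd:spstsbp} to our non-homogeneous mutation setting, the main tool being the Palm/Campbell formula for the Poisson point measure $\sum_i \delta_{(t_i,\theta_i,Y^i)}$. Because of that formula, an expectation involving a sum over the distinguished atom $i_0$ can be rewritten as an integral with respect to the intensity $2\beta\, dt\,\mu(d\theta)\,\N^{\pthe}[dY^*]$ of a function of a ``typical'' atom $(-t,\theta,Y^*)$ against the original PPM (which, by Slivnyak, remains independent of the added atom).

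First I would describe the event $\{A\in dt,\ \Theta\in d\theta\}$ as the event that the distinguished atom is at position $(-t,\theta,Y^*)$, is still alive at time $0$ (i.e.\ $Y^*_t>0$), and that every other atom $(t_j,\theta_j,Y^j)$ with $t_j<-t$ satisfies the extinction constraint $Y^j_{-t_j}=0$. On this event one has $Z^O=Y^*_t$, $Z^I=\sum_{-t<t_j<0}Y^j_{-t_j}$ and $Z^A=\sum_{t_j<-t}Y^j_{-t-t_j}$, so the joint Laplace transform factorises into three pieces handled separately.
\begin{enumerate}
\item The oldest family contributes
\[
\N^{\pthe}\!\left[\ind_{\{Y_t>0\}}\expp{-\eta Y_t}\right]=\N^{\pthe}[\ind_{\{Y_t>0\}}]-\N^{\pthe}[\ind_{\{Y_t>0\}}(1-\expp{-\eta Y_t})]=\cthe{t}-u^\theta(\eta,t),
\]
using \reff{eq:c_q} and the fact that $Y_t=0$ trivially yields $1-\expp{-\eta Y_t}=0$.
\item The atoms with $t_j\in(-t,0)$ form an independent PPM with intensity $2\beta\,dr\,\mu(d\theta')\N^{\psi_{\theta'}}[dY]$, so the exponential formula and \reff{int-u} give
\[
\exp\!\left(-2\beta\int_0^t ds\int_0^\infty u^{\theta'}(\gamma,s)\,\mu(d\theta')\right).
\]
\item For atoms with $t_j<-t$, changing variables $s=-t-t_j>0$ leads to the PPM weight $\expp{-\lambda Y_s}\ind_{\{Y_{s+t}=0\}}$. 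Using the Markov property of $Y$ under $\N^{\psi_{\theta'}}$ together with the branching identity $\E^{\psi_{\theta'}}_x[\ind_{\{Y_t=0\}}]=\expp{-x c^{\theta'}(t)}$, I would collapse the extinction indicator into a shifted Laplace argument:
\[
\N^{\psi_{\theta'}}\!\left[1-\expp{-\lambda Y_s}\ind_{\{Y_{s+t}=0\}}\right]=\N^{\psi_{\theta'}}\!\left[1-\expp{-(\lambda+c^{\theta'}(t))Y_s}\right]=u^{\theta'}(\lambda+c^{\theta'}(t),s),
\]
and the exponential formula then yields the remaining factor.
\end{enumerate}
Multiplying the three pieces by the intensity $2\beta\,dt\,\mu(d\theta)$ produces the announced formula.

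The only genuinely new ingredient with respect to \cite{cd:spstsbp} is the integration of each contribution against $\mu(d\theta')$, which is purely formal since it is carried out atom by atom. The main (modest) obstacle is the Markov-property step in item (iii), where the constraint $Y_{s+t}=0$ with a type-dependent parameter $\theta'$ must be turned into the shifted argument $\lambda+c^{\theta'}(t)$; once this identity is in hand the rest is a book-keeping application of the exponential formula.
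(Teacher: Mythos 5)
Your proof is correct and follows exactly the route the paper intends (it gives no proof, only the pointer to Theorem 4.1 of \cite{cd:spstsbp}): Palm/Mecke formula for the distinguished oldest atom, independence of the PPM on the three disjoint time regions, and the key identity $\N^{\psi_{\theta'}}[1-\expp{-\lambda Y_s}\ind_{\{Y_{s+t}=0\}}]=u^{\theta'}(\lambda+c^{\theta'}(t),s)$ obtained from the Markov property and $\rP^{\psi_{\theta'}}_x(Y_t=0)=\expp{-x c^{\theta'}(t)}$. All three factors check out against the stated formula.
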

We deduce the following result. 
\begin{lem}
Conditionally on $A$, $(Z^O, \Theta)$, $Z^I$ and $Z^A$ are independent.
\end{lem}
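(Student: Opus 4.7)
The plan is to read off the claimed conditional independence directly from the joint Laplace transform in Lemma~\ref{lem:jd}. For each fixed $t>0$, the right-hand side of that formula is already displayed as a product of three pieces: a factor $2\beta\,\mu(d\theta)(c^\theta(t)-u^\theta(\eta,t))$ depending only on $(\eta,\theta)$, a factor $\exp(-2\beta\int_0^t ds \int u^{\theta'}(\gamma,s)\mu(d\theta'))$ depending only on $\gamma$, and a factor $\exp(-2\beta\int_0^\infty ds \int u^{\theta'}(\lambda+c^{\theta'}(t),s)\mu(d\theta'))$ depending only on $\lambda$. These three Laplace variables are precisely dual to $(Z^O,\Theta)$, $Z^I$, and $Z^A$ respectively.

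The execution is then a routine normalisation. I would divide both sides of Lemma~\ref{lem:jd} by the density $\rP(A\in dt)$, obtained from the same formula by setting $\lambda=\gamma=\eta=0$ and integrating over $\theta$. The denominator depends only on $t$, while the numerator factorises into three pieces each carrying a single family of Laplace variables. Identifying each factor with the corresponding conditional marginal (by setting the remaining Laplace variables to zero in turn, and replacing the indicator $\ind_{\Theta\in d\theta}$ by an arbitrary non-negative test function $h(\theta)$ in the $(\eta,\theta)$ block) yields
\[
\rE[e^{-\lambda Z^A-\gamma Z^I-\eta Z^O}h(\Theta)\mid A=t] = \rE[e^{-\eta Z^O}h(\Theta)\mid A=t]\,\rE[e^{-\gamma Z^I}\mid A=t]\,\rE[e^{-\lambda Z^A}\mid A=t],
\]
which is the desired conditional independence.

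There is essentially no obstacle; the substance of the claim is entirely contained in the structural form of Lemma~\ref{lem:jd}. The only subtlety worth noting is the grouping: the factor $c^\theta(t)-u^\theta(\eta,t)$ couples $\eta$ and $\theta$, so $\Theta$ and $Z^O$ are genuinely dependent given $A$ and must be kept in one block, whereas $Z^I$ and $Z^A$ appear as cleanly decoupled exponential pieces and can be separated off on their own.
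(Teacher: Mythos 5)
Your proposal is correct and is exactly the paper's (implicit) argument: the paper offers no proof beyond ``We deduce the following result'' from Lemma~\ref{lem:jd}, i.e.\ it reads the conditional independence straight off the product structure of the joint Laplace transform, with the $(\eta,\theta)$ block kept together just as you note. The normalisation by $\rP(A\in dt)$ and the identification of each factor with the corresponding conditional marginal is routine and handled correctly.
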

Now we concentrate on the population size at the MRCA. Recall
$\Delta^\theta_t$ defined in \reff{eq:def-D}. 
\begin{prop}
   \label{prop:ZA}
Let $t \in (0,+\infty ) $. We have for $\eta\geq 0$:
\begin{equation} 
\label{eq:EexpZA}
\rE[\expp{-\eta Z^A}| A = t] 
=\exp{\left(-2  \int_0^\infty 
\log(1+\eta \Delta_t^\theta)
\, \mu(d\theta)\right)},
\end{equation}
and
\begin{equation} 
\label{eq:EZA}
\rE[Z^A| A = t] = 2 \int_0^\infty \Delta^\theta_t \,\mu(d
\theta)<+\infty .
\end{equation}
Furthermore conditionally on A (or not), $Z^A$ is stochastically
smaller than $Z_0$, that is for all $z>0$:
\begin{equation}
   \label{eq:order}
\rP(Z^A\leq z|A=t) \geq  \rP(Z_0\leq z)
\quad \text{and}\quad
\rP(Z^A\leq z) \geq  \rP(Z_0\leq z). 
\end{equation}
\end{prop}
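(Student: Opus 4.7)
The plan is to extract \eqref{eq:EexpZA} from Lemma \ref{lem:jd} by a direct computation, derive \eqref{eq:EZA} by differentiation at $\eta=0$, and then obtain the stochastic comparison \eqref{eq:order} via a L\'evy-measure comparison argument.

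For \eqref{eq:EexpZA}, I would apply Lemma \ref{lem:jd} with the lemma's $\gamma$ and $\eta$ parameters both set to $0$ (so that $u^{\theta'}(\gamma,s)\equiv 0$ and $u^\theta(\eta,t)\equiv 0$), rename the lemma's $\lambda$ as $\eta$ to match the proposition, and integrate over $\theta$. Using \reff{int-u} the joint law of $(Z^A,A)$ in Laplace form becomes
\[
\rE[\expp{-\eta Z^A}; A\in dt] = 2\beta\,dt\,\biggl(\int_0^\infty c^\theta(t)\,\mu(d\theta)\biggr)\exp\biggl(-2\int_0^\infty \log\biggl(1+\frac{\eta+c^{\theta'}(t)}{2\theta'}\biggr)\mu(d\theta')\biggr).
\]
Setting $\eta=0$ recovers $\rP(A\in dt)$ as in the proof of Lemma \ref{lem:TMRCA}. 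Dividing, the integrand of the remaining exponent is $\log\bigl(1+(\eta/(2\theta'))/(1+c^{\theta'}(t)/(2\theta'))\bigr)$; the identity $1+c^{\theta'}(t)/(2\theta')=1/(1-\expp{-2\beta\theta' t})$ together with \reff{eq:def-D} simplifies this to $\log(1+\eta\Delta_t^{\theta'})$, yielding \eqref{eq:EexpZA}.

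Equation \eqref{eq:EZA} then follows by differentiating \eqref{eq:EexpZA} in $\eta$ at $\eta=0$. To establish finiteness of $\int\Delta_t^\theta\,\mu(d\theta)$ I would split at $\theta=1$: on $(0,1]$ the bound $\Delta_t^\theta\leq\beta t$ combined with finiteness of $\mu$ near $0$ (a consequence of the first condition in \reff{eq: con-mu}) gives an integrable contribution, while on $(1,\infty)$ the bound $\Delta_t^\theta\leq 1/(2\theta)$ together with the second condition in \reff{eq: con-mu} does the same.

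For \eqref{eq:order}, which is the main point, the key step is the Frullani identity $\log(1+\eta\Delta) = \int_0^\infty u^{-1}\expp{-u/\Delta}(1-\expp{-\eta u})\,du$, valid for $\eta,\Delta\geq 0$. Applying it to \eqref{eq:EexpZA} and to the Laplace transform \reff{eq:lap-Z} of $Z_0$ identifies both $Z^A$ conditional on $A=t$ and $Z_0$ as infinitely divisible random variables on $[0,\infty)$ with L\'evy measures
\[
\nu_t(du) = \frac{2\,du}{u}\int_0^\infty \expp{-u/\Delta_t^\theta}\,\mu(d\theta) \quad\text{and}\quad \nu_\infty(du) = \frac{2\,du}{u}\int_0^\infty \expp{-2\theta u}\,\mu(d\theta),
\]
respectively. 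Since $\Delta_t^\theta\leq 1/(2\theta)$ forces $\expp{-u/\Delta_t^\theta}\leq\expp{-2\theta u}$ pointwise, one has $\nu_t\leq\nu_\infty$ as measures, and the standard subordinator coupling realises $Z_0$ as the sum of a copy of $Z^A$ given $A=t$ and an independent jump process with L\'evy measure $\nu_\infty-\nu_t$, proving \eqref{eq:order} conditionally on $A=t$; integrating out $A$ yields the unconditional version. The main obstacle I expect is to check that $\nu_t$ is genuinely a L\'evy measure, i.e.\ $\int(u\wedge 1)\,\nu_t(du)<\infty$; this reduces to \reff{eq: con-mu} by a calculation analogous to the one used to show $\rE[Z^A\mid A=t]<\infty$.
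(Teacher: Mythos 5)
Your derivation of \reff{eq:EexpZA} and \reff{eq:EZA} is correct and matches the paper: the authors also start from the joint law (their display \reff{conzaa}, which is Lemma \ref{lem:jd} with the other parameters set to zero), divide by $\rP(A\in dt)$, and simplify with \reff{int-u}, \reff{int-c} and \reff{eq:def-D}; your algebra $1+c^{\theta}(t)/2\theta=(1-\expp{-2\beta\theta t})^{-1}$ is exactly the needed step, and your two-sided bound on $\Delta_t^\theta$ settles the finiteness in \reff{eq:EZA}. Where you genuinely diverge is the stochastic order \reff{eq:order}. The paper reads \reff{eq:EexpZA} as saying that, given $\{A=t\}$, $Z^A$ is the stationary population of the same Poissonian model but with each type $\theta$ replaced by the larger type $1/(2\Delta_t^\theta)\geq\theta$, and then invokes the monotone coupling of CB processes in the branching parameter (from \cite{ad:cbmcsbpui,adv:plcrt}) to build a pathwise domination $\sum_i Y^{i,1}_{-t_i}\leq\sum_i Y^{i,2}_{-t_i}$ at the level of the whole point measure. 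You instead use the Frullani representation to read off the L\'evy measures $\nu_t$ and $\nu_\infty$ of the two infinitely divisible laws and observe $\nu_t\leq\nu_\infty$ (again from $\Delta_t^\theta\leq 1/(2\theta)$), whence $Z_0\stackrel{d}{=}X+Y$ with $X$ distributed as $Z^A$ given $A=t$ and $Y\geq 0$ independent with L\'evy measure $\nu_\infty-\nu_t$. Both arguments are valid and hinge on the same inequality; yours is more self-contained analytically (no appeal to the CB-coupling machinery) and exhibits the bottleneck as an explicit independent decomposition of $Z_0$, while the paper's coupling is pathwise and keeps the genealogical interpretation of the comparison. One small remark: the integrability check you flag as the ``main obstacle'' is actually immediate, since $\nu_t\leq\nu_\infty$ and $\nu_\infty$ is a L\'evy measure because the Laplace exponent of $Z_0$ in \reff{eq:lap-Z} is finite under \reff{eq: con-mu}.
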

The fact that 
$Z^A$ is stochastically
smaller than $Z_0$
corresponds to the bottleneck effect. 
\begin{proof}
Using \eqref{eq:joint-athe}, we get: 
\begin{equation} \label{conzaa} 
\rE[\expp{-\eta Z^A}| A = t] 
=\frac{\exp\left(-2 \beta \int_0^\infty ds
\int_0^\infty u^{\theta} (\eta + c^{\theta}(t), s)\, \mu(d \theta)\right)}
{\exp\left(-2 \beta \int_0^\infty ds \int_0^\infty u^{\theta} ( c^{\theta}(t), s)\,\mu
    (d \theta)\right)}\cdot 
\end{equation}
Then, using \reff{int-u}, \reff{int-c} and
\reff{eq:def-D}, it is easy to get \reff{eq:EexpZA}. This readily
implies \reff{eq:EZA}. 

We now prove the stochastic order. First notice that
$\Delta_t^\theta\leq 1/(2\theta)$. We deduce  that for all $\eta\geq
0$, we have:
\[
\rE[\expp{-\eta Z^A}| A = t] \geq \rE[\expp{- \eta Z_0}].
\]
This means that  $Z^A$ is  smaller than $Z_0$ in the Laplace
transform order. 
We will however prove the stronger result on the stochastic order.

We  deduce  from   \reff{eq:EexpZA}  that  conditionally  on  $\{A=t\}$,
$Z^A$  is   distributed  as  $\sum_{i\in   I}  Y^{i,1}_{-t_i}$,  with
$\cz^1=\sum_{i\in I} \delta_{Y^{i,1}, t_i}$ a PPM with intensity $2\beta
dt  \int_0^\infty  \mu(d\theta) \N^{\psi_{1/(2\Delta_t^\theta)}}  [dY]$.
As recalled in Section \ref{sec:prelim}, it is possible to define on the
same space two CB processes $Y^1$ and $Y^2$ such that $Y^1\leq Y^2$ a.e. and $Y^1$
(resp.  $Y^2$)  is  distributed under  $\N^{\psi_{1/(2\Delta_t^\theta)}}
[dY]$    (resp.      $\N^{\psi_\theta}    [dY]$)    since    $\theta\leq
1/(2\Delta_t^\theta)$.   We deduce  that  $\cz^1$ can  be  defined on  a
possible  enlarged  space  so  that there  exists a  PPM $\cz^2=\sum_{i\in  I}
\delta_{Y^{i,2},  t_i}$  with  intensity $2\beta  dt \int_0^\infty
\mu(d\theta) \N^{\psi_{\theta}} [dY]$ and  such that a.s.  for all $i\in
I$,  $Y^{i,1}\leq  Y^{i,2}$.  This  implies  that  a.s.  $\sum_{i\in  I}
Y^{i,1}_{-t_i}\leq  \sum_{i\in  I}  Y^{i,2}_{-t_i}$. As  $\sum_{i\in  I}
Y^{i,2}_{-t_i}$ is distributed as $Z_0$, we deduce that 
$Z^A$ (conditionally  on  $\{A=t\}$)  is stochastically less than
$Z_0$. This gives the first part of \reff{eq:order}.
By integrating the first part of \reff{eq:order} with respect to
$\rP(A\in dt)$, we
deduce the second part of \reff{eq:order}.
\end{proof}

As a direct consequence of \reff{eq:EZA}
we can compute the expectation of $Z^A$, which will be used in Section
\ref{sec:stable-mut}. 
 
\begin{lem} We have:
\begin{equation} 
\label{expeza}
\rE[Z^A] = 2 \beta \int_0^\infty dt \int_0^\infty \mu(d\theta)
\frac{2\theta}{\ebt-1}  
\int_0^\infty \mu(d\theta') \frac{1 - \expp{- 2 \beta \theta' t}}{\theta'}
\expp{2 \int_0^\infty \log(1 - \expp{-2 \beta \theta'' t}) \mu(d\theta'')}.
\end{equation}
\end{lem}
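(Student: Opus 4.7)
The plan is to compute $\rE[Z^A]$ by conditioning on the TMRCA and integrating out, combining the conditional mean from Proposition \ref{prop:ZA} with the marginal law of $A$ extracted from Lemma \ref{lem:TMRCA}.

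First, I would recall the formula $\rE[Z^A \mid A = t] = 2 \int_0^\infty \Delta^\theta_t \, \mu(d\theta) = \int_0^\infty \frac{1-\expp{-2\beta\theta' t}}{\theta'}\, \mu(d\theta')$, which is \reff{eq:EZA} together with the definition \reff{eq:def-D} of $\Delta^\theta_t$.

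Next, I would compute the density of $A$. From Lemma \ref{lem:TMRCA}, $\rP(A<t) = \exp\bigl(2\int_0^\infty \log(1-\expp{-2\beta\theta t})\,\mu(d\theta)\bigr)$. A direct differentiation under the integral (legal because the integrand is bounded by the finite quantities appearing in the proof of Lemma \ref{lem:TMRCA}, via condition \reff{eq: con-mu}) gives $\frac{d}{dt}\log(1-\expp{-2\beta\theta t}) = \frac{2\beta\theta}{\expp{2\beta\theta t}-1}$, so that
\[
\rP(A\in dt) = \left(\int_0^\infty \frac{4\beta\theta}{\expp{2\beta\theta t}-1}\,\mu(d\theta)\right) \exp\!\left(2\int_0^\infty \log(1-\expp{-2\beta\theta'' t})\,\mu(d\theta'')\right)\,dt.
\]
Alternatively, this same density can be read off from \reff{eq:joint-athe} by integrating out $\theta$, which is the route I would cite to avoid a separate differentiation argument.

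Finally, I would apply the tower property $\rE[Z^A] = \int_0^\infty \rE[Z^A \mid A=t]\, \rP(A\in dt)$ and multiply the two expressions, matching the prefactor $4\beta = 2\beta\cdot 2$ to recover \reff{expeza}. There is no genuine obstacle here beyond bookkeeping: the finiteness of $\rE[Z^A\mid A=t]$ is ensured by \reff{eq:EZA} and condition \reff{eq: con-mu}, and Fubini applies since everything is non-negative. The resulting expression is exactly \reff{expeza}, so the proof reduces to assembling the three ingredients above.
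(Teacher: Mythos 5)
Your proposal is correct and matches the paper's intent: the lemma is stated there as a direct consequence of \reff{eq:EZA}, and the computation you describe — the tower property combined with the marginal density of $A$ read off from \reff{eq:joint-athe} (or equivalently from differentiating the law in Lemma \ref{lem:TMRCA}) — is exactly the omitted bookkeeping, with the constants $2\beta\cdot 2\theta = 4\beta\theta$ matching as you note.
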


\section{Asymptotics for the number of families and ancestors}
\textbf{We  assume that conditions \reff{eq: con-mu} hold.}

\subsection{Asymptotics for the number of families} 
\label{sec:N}
For $s>0$, let $N_s$ be the number of families at time $-s$ which are still
alive at time $0$:
\[
N_s = \sum_{i \in I} \ind_{\{t_i < -s, \zeta_i > -t_i\}} =\sum_{i \in I}
\ind_{\{t_i<-s, Y_{-t_i}^i>0\}}.
\]
We set:
\begin{equation}
   \label{eq:def-L}
\Lambda(s)=-2 \int_0^\infty \mu(d \theta)\,\log(1 - \embs).
\end{equation}
We have the following result.
\begin{lem}
   \label{lem:cvN/L}
We have a.s.:
\[
\lim_{s \downarrow 0+} \frac{N_s}{\Lambda(s)} = 1.
\]
\end{lem}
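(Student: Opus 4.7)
The plan is to recognize $N_s$ as a Poisson random variable of mean $\Lambda(s)$, obtain almost-sure convergence along a discrete sequence $s_n\downarrow 0$ via a Chernoff--Borel--Cantelli argument, and extend to continuous $s\downarrow 0$ by a sandwich exploiting the monotonicity of both $N$ and $\Lambda$.

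First, $N_s$ counts the atoms $(t_i,\theta_i,Y^i)$ of the driving Poisson point measure falling in the measurable set $\{(t,\theta,Y):t<-s,\ Y_{-t}>0\}$. By \reff{int-c}, the intensity of this set equals
\[
2\beta\int_s^\infty dr\int_0^\infty \mu(d\theta)\,\N^{\pthe}[\zeta>r]
=2\int_0^\infty \mu(d\theta)\,(-\log(1-\emb{s}))
=\Lambda(s),
\]
which is finite under \reff{eq: con-mu}. Hence $N_s$ is Poisson-distributed with parameter $\Lambda(s)$, so in particular $\rE[N_s]=\Lambda(s)$. It is also easy to check that $\Lambda$ is continuous and strictly decreasing on $(0,\infty)$ with $\Lambda(0^+)=+\infty$ and $\Lambda(+\infty)=0$: continuity follows by dominated convergence on compact sub-intervals $[a,b]\subset(0,\infty)$ (dominated by $-\log(1-\emb{a})$, which is $\mu$-integrable), monotonicity is immediate, and divergence at $0$ follows from $1-\expp{-x}\leq x$ together with the integrability $\int_{0+}|\log\theta|\,\mu(d\theta)<\infty$.

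Consequently one may pick a sequence $s_n\downarrow 0$ with $\Lambda(s_n)=n$. The standard Chernoff estimate for the Poisson distribution then yields, for each $\epsilon\in(0,1)$,
\[
\rP(|N_{s_n}-n|>\epsilon n)\leq 2\expp{-h(\epsilon)n},\quad h(\epsilon)>0,
\]
a summable bound, so Borel--Cantelli gives $N_{s_n}/\Lambda(s_n)\to 1$ almost surely. Finally, both $s\mapsto N_s$ and $s\mapsto\Lambda(s)$ are non-increasing, so for $s\in[s_{n+1},s_n]$,
\[
\frac{N_{s_n}}{\Lambda(s_n)}\cdot\frac{n}{n+1}\leq \frac{N_s}{\Lambda(s)}\leq \frac{N_{s_{n+1}}}{\Lambda(s_{n+1})}\cdot\frac{n+1}{n},
\]
and both extremes tend almost surely to $1$, which concludes the proof. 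The only delicate point is the discretization: $s_n$ must be sparse enough for the Chernoff exponents to be summable but dense enough that $\Lambda(s_n)/\Lambda(s_{n+1})\to 1$; the choice $\Lambda(s_n)=n$ achieves both simultaneously.
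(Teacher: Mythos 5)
Your proof is correct and follows essentially the same route as the paper: both identify $N_s$ as a Poisson random variable with parameter $\Lambda(s)$ and then apply a strong law of large numbers as $\Lambda(s)\to\infty$. The only difference is that the paper invokes the SLLN for L\'evy processes applied to the unit-rate Poisson process $(N_{\Lambda^{-1}(t)}, t\geq 0)$, whereas you re-prove that step by hand via a Chernoff bound, Borel--Cantelli along the sequence $\Lambda(s_n)=n$, and a monotone sandwich -- a self-contained and equally valid execution.
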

\begin{proof}
   Notice that 
 $N_s$ is by construction a Poisson random variable with intensity
\[
2 \beta \int_{-\infty}^{-s} dr \int_0^\infty \mu(d \theta)\,\N^{\pthe}[\zeta > -r]
= -2 \int_0^\infty \mu(d \theta)\,\log(1 - \embs)=\Lambda(s),
\]
where we used  \reff{eq:c_q} and \reff{int-c} for the first equality. 
As $s \to 0+$, $\Lambda(s)$ goes to infinity.
Then notice that $(N_{\Lambda^{-1}(s)},s\geq 0)$ is a Poisson process
with parameter 1.  We deduce the
result from  the strong law of large numbers for L\'evy processes.
\end{proof}

\subsection{The genealogical tree}
 \label{sec:geneal}
 In order to consider the number  of ancestors $M_s$ at time $-s$ of the
 current  population  living at  time  $0$,  we  need to  introduce  the
 genealogical tree for a CB process, see \textsc{Le Gall} \cite{lg:iert}
 or \textsc{Duquesne} and \textsc{Le Gall} \cite{dlg:rtlpsbp}. Since the
 branching mechanism  is quadratic, we  will code the  genealogical tree
 using Brownian excursion.  Let $ W =  (W_t, t \in \R_+) $ be a Brownian
 motion.   We consider the  Brownian motion  $W^\theta=(W_t^\theta, t\in
 \R_+)$  with negative  drift  and the  corresponding reflected  process
 above its minimum $H^\theta=(H^\theta(t), t\in \R_+)$:
\[
W_t^\theta=\sqrt{\frac{2}{\beta}} W_t - 2\theta t
\quad\text{and}\quad
H^\theta(t)= W_t^\theta -\inf_{s\in [0,t]} W_s^\theta.
\]
We deduce from equation $(1.7)$ in \cite{dlg:rtlpsbp} that  $H^\theta$ is 
the height process associated to
the branching  mechanism $\psi_\theta$. For a function $H$, we set:
\[
\max(H)=\max(H(t), t\in \R_+).
\]
Let  $\rN^{\psi_\theta} [dH^\theta]$ be
the excursion measure of  $H^\theta$  above  $0$  normalized  such that
$\rN^{\psi_\theta}[\max(H^\theta)\geq r]= c^\theta(r)$. Let
$(L_t^x(H^\theta), t\in \R_+, x\in \R_+)$ be the local time of
$H^\theta$ at time $t$ and level $x$. Let
$\zeta=\inf\{t>0; H^\theta(t)=0\}$ be the duration of the excursion
$H^\theta$ under $\rN^{\psi_\theta} [dH^\theta]$. 
We recall
that $(L_{\zeta}^r(H^\theta), r\in \R_+)$ under $\rN^{\psi_\theta} $
is distributed as $Y^\theta$ under $\N^{\psi_\theta}$. 
From now on we shall identify $Y^\theta$ with $(L_{\zeta}^r(H^\theta),
r\in \R_+)$ and write  $\N^{\psi_\theta}$ for $\rN^{\psi_\theta} $. When
there is no confusion, we shall write $H$ for $H^\theta$ and $Y$ for
$Y^\theta$.  We now recall the construction of the genealogical tree of
the CB process  $Y$ from $H$. 

Let $f$ be a continuous non-negative function defined on $[0,+\infty)$,
such that $f(0)=0$, with compact support. We set $\zeta^f=\sup\{t;
f(t)>0\}$, 
with the convention $\sup\emptyset=0$. Let $d^f$ be the non-negative
function defined by:
\[ 
d^f(s,t) = f(s) + f(t) - 2 \inf_{u\in [s\wedge t , s\vee t]} f(u). 
\]
It can be easily checked that $d^f$ is a semi-metric on $[0,\zeta^f]$. One can
define the equivalence relation associated to $d^f$ by $s\sim t$ if and only if
$d^f(s,t)=0$. Moreover, when we consider the quotient space $
T^f=[0,\zeta^f]/_{ \sim} $
and, noting again $d^f$ the induced metric on $T^f$ and rooting $T^f$ at
$\emptyset^f$, the equivalence class of 0, it can be checked that the
space $(T^f,d^f,\emptyset^f)$ is a compact rooted real tree. 

The so-called  genealogical tree of the CB process $Y$  is the real tree
$\Tau=(T^H, d^H, \emptyset^H)$. In what follows, we shall mainly present
the result using the height process $H$ instead of the genealogical tree.

\subsection{Asymptotics for the number of ancestors}
\label{sec:M}

Let $a>0$ and $(H_k, k\in \ck_a)$ be the excursions of $H$ above level
$a$. It is well known that $\sum_{k\in \ck_a} \delta_{H_k}$ is under
$\N^{\psi_\theta} $ 
conditionally on $(Y_r, r\in [0,a])$ a PPM with intensity
$Y_a \N^{\psi_\theta} [dH]$.  Let $b>a>0$. We define the number $R_{a,
  b}$ of ancestors at time $a$ of the population
living at time $b$ as the number of excursions  above level $a$ which
reach level $b $:
\[
R_{a, b} = \sum_{k \in \ck_a} \ind_{\{\max(H_k) \geq b - a\}}.
\]
To emphasize the dependence of $Y$ and $R$ in $H$, we may write $Y_a(H)$
and $R_{a,b}(H)$.

We give the joint distribution of $(Y_a, Y_b, R_{a,b})$. 

\begin{lem}
   \label{lem:YabRab}
Let $0<a<b$. For $\lambda,\rho,\eta \geq 0$, we have:
\[
\N^{\pthe}\left[1 - \expp{- \rho Y_a-\lam Y_{b} - \eta R_{a,b} }\right]=
u^\theta(\rho + \gamma^\theta_{b-a}(\lambda,\eta), a), 
\]
with
\[
\gamma^\theta_r(\lambda,\eta)
 = (1 - \expp{-\eta}) \cthe r
+ \expp{-\eta} u^{\theta}(\lam,r).
\]
\end{lem}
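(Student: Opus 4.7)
My approach uses the branching property recalled just above the lemma: conditionally on $(Y_r, r\in [0,a])$, the family $(H_k)_{k\in \ck_a}$ of excursions of $H$ above level $a$ is under $\N^{\psi_\theta}$ a Poisson point measure (PPM) with intensity $Y_a\,\N^{\psi_\theta}[dH]$. The standard local time decomposition of the height process gives $Y_b = \sum_{k\in \ck_a} Y_{b-a}(H_k)$ and, by definition, $R_{a,b} = \sum_{k\in \ck_a} \ind_{\{\max(H_k)\geq b-a\}}$. Setting $r:=b-a$, Campbell's exponential formula for PPM therefore yields
\[
\N^{\psi_\theta}\!\left[\expp{-\lambda Y_b - \eta R_{a,b}} \,\Big|\, (Y_u)_{u\leq a}\right] = \exp\!\left(-Y_a\, \cq\right), \quad \text{where }\cq := \N^{\psi_\theta}\!\left[1 - \expp{-\lambda Y_r - \eta \ind_{\{\max(H)\geq r\}}}\right].
\]

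The key computation is then to identify $\cq = \gamma^\theta_r(\lambda, \eta)$. Writing $A:=\{\max(H)\geq r\}$ and using that $Y_r = 0$ on $A^c$, one checks the pointwise identity
\[
1 - \expp{-\lambda Y_r - \eta \ind_A} = (1-\expp{-\eta})\,\ind_A + \expp{-\eta}\,(1 - \expp{-\lambda Y_r})
\]
by inspecting both cases: on $A^c$ both sides vanish since $Y_r=0$, while on $A$ both sides equal $1 - \expp{-\eta - \lambda Y_r}$. Integrating against $\N^{\psi_\theta}$ and using $\N^{\psi_\theta}[\ind_A]=c^\theta(r)$ together with the defining relation $u^\theta(\lambda,r) = \N^{\psi_\theta}[1 - \expp{-\lambda Y_r}]$ then produces exactly $(1-\expp{-\eta})\,c^\theta(r) + \expp{-\eta}\,u^\theta(\lambda,r) = \gamma^\theta_r(\lambda,\eta)$.

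To finish, I observe that the integrand $1-\expp{-\rho Y_a - \lambda Y_b - \eta R_{a,b}}$ vanishes on $\{Y_a=0\}$ (since then $Y_b=0$ and $R_{a,b}=0$), so all integrals may be restricted to the finite-mass set $\{Y_a>0\}$ where $\N^{\psi_\theta}$ has total mass $c^\theta(a)$. Combining the conditional identity with the $(Y_u)_{u\leq a}$-measurable factor $\expp{-\rho Y_a}$ then gives
\[
\N^{\psi_\theta}\!\left[1 - \expp{-\rho Y_a - \lambda Y_b - \eta R_{a,b}}\right] = \N^{\psi_\theta}\!\left[1 - \expp{-(\rho + \gamma^\theta_r(\lambda,\eta))\,Y_a}\right] = u^\theta(\rho + \gamma^\theta_r(\lambda,\eta),\, a),
\]
the last equality being the definition of $u^\theta(\cdot, a)$. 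No serious obstacle is anticipated: the one point requiring care is the $\sigma$-finite character of $\N^{\psi_\theta}$, handled by restricting every computation to $\{Y_a>0\}$; otherwise the argument reduces to the algebraic splitting above and a direct application of the exponential formula.
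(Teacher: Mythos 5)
Your proof is correct and follows essentially the same route as the paper: decompose $Y_b$ and $R_{a,b}$ over the excursions above level $a$, apply the exponential formula for the conditional PPM with intensity $Y_a\,\N^{\psi_\theta}[dH]$, and identify the inner functional as $\gamma^\theta_{b-a}(\lambda,\eta)$ via the same pointwise splitting of $1-\expp{-\lambda Y_r-\eta\ind_A}$. The only addition is your explicit restriction to $\{Y_a>0\}$ to handle the $\sigma$-finiteness of $\N^{\psi_\theta}$, which the paper leaves implicit.
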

\begin{proof}
We have:
\begin{align*}
\N^{\pthe}\left[1 - \expp{- \rho Y_a-\lam Y_{b} - \eta R_{a,b}}\right]
&=\N^{\pthe}\left[1 - \expp{ - \rho Y_a - \sum_{k\in \ck_a} (\lambda
    Y_{b-a}(H_k)  
+ \eta\ind_{\{\max(H_k)\geq  b-a\}})} \right]\\
&=\N^{\pthe}\left[1 - \expp{ - Y_a \left(\rho+ \N^{\psi_\theta} \left[1-
      \exp( -\lambda Y_{b-a} - \eta \ind_{\{\zeta\geq  b-a\}})
    \right]\right)}\right]\\
&=\N^{\pthe}\left[1 - \expp{ - Y_a
    \left(\rho+\gamma^\theta_{b-a}(\lambda,\eta)\right)}\right]\\ 
&= u^\theta(\rho + \gamma^\theta_{b-a}(\lambda,\eta), a),
\end{align*}
where we   used the property of the PPM $\sum_{k\in \ck_a} \delta_{H_k}$
for the second equality, and 
\[
1- \expp{-\lambda Y_s - \eta \ind_{\{\zeta\geq s \}}}
= (1-\expp{-\eta} ) \ind_{\{\zeta\geq s \}} + \expp{-\eta} (1-
\expp{-\lambda Y_s} )
\]
as  $Y_s = 0$ on $\{\zeta < s\}$, for the third equality. 
\end{proof}

In order to describe the genealogical structure of $\cz$, following the
beginning of Section \ref{sec:defZ}, we shall consider under $\rP$ the  
PPM:
$$
\sum_{i \in I} \delta_{(t_i, \theta_i, H^i)}(dt, d \theta, dH),
$$
with intensity $2 \beta dt  \mu(d \theta) \N^{\pthe}[dH]$. 

Let  $s > 0$. We define the super-process for the number of ancestors
for the population at time $0$ of
each family:
\[
\cm_s(d\theta) = \sum_{i \in I} \ind_{\{t_i < -s\}} R_{-s - t_i,
  -t_i}(H^{i}) \, \delta_{\theta_i}(d\theta).
\]
Then the number of ancestors at time $-s$ of the population at time 0, 
is:
\[
M_s =\langle \cm_s, 1 \rangle.
\]

We will first consider the joint distribution of $(\cz_0, \cz_{-s}, \cm_s)$.
\begin{prop} 
\label{prop:ZZM-Laplace}
Let $\rho$, $\lambda$ and $\eta$ be non-negative measurable functions
defined on $\R_+$. We have:
\[
\rE\left[\expp{- \langle \cz_{-s}, \rho \rangle - \langle \cz_0, \lam \rangle 
- \langle \cm_s, \eta \rangle }\right]
= \exp\left(-2  \int_0^\infty \mu(d\theta) \,  \left(\log\left(1+
      \lambda(\theta)     \Delta^\theta _s \right)+\log \left(1+ w^\theta(s)
    \right)\right) \right),
\]
with for $r>0$:
\[
w^\theta(s)=\frac{\rho(\theta)+
\gamma^\theta_s(\lambda(\theta),\eta(\theta)) }{2\theta}\cdot
\]
\end{prop}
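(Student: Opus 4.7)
The strategy is to apply the exponential formula for Poisson point measures to the PPM $\sum_{i\in I}\delta_{(t_i,\theta_i,H^i)}$ with intensity $2\beta\, dt\, \mu(d\theta)\,\N^{\pthe}[dH]$, where the functional of $(t,\theta,H)$ inside the exponential is
\[
F(t,\theta,H)=\rho(\theta) Y_{-s-t}(H)\ind_{\{t<-s\}}+\lambda(\theta) Y_{-t}(H)\ind_{\{t<0\}}+\eta(\theta) R_{-s-t,-t}(H)\ind_{\{t<-s\}}.
\]
This gives $\rE[\expp{-\langle\cz_{-s},\rho\rangle-\langle\cz_0,\lambda\rangle-\langle\cm_s,\eta\rangle}]=\exp(-2\beta \int\int \N^{\pthe}[1-\expp{-F}]\, dt\, \mu(d\theta))$, and the job reduces to computing the inner integral over $t$ for each $\theta$.

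I would split the time integral at $t=-s$. For $t\in(-s,0)$, only the $\lambda$-term survives, and the change of variables $r=-t$ together with \reff{int-u} yields
\[
2\beta\int_{-s}^{0}\N^{\pthe}[1-\expp{-\lambda(\theta) Y_{-t}}]\, dt = 2\beta\int_0^{s} u^\theta(\lambda(\theta),r)\, dr = 2\log\bigl(1+\lambda(\theta)\Delta^\theta_s\bigr),
\]
which produces the first logarithm in the target formula. For $t<-s$, set $r=-s-t>0$ so that $-t=r+s$ and the contribution becomes $R_{r,r+s}$, $Y_{r+s}$ and $Y_r$ of the single excursion $H$; this is precisely the setting of Lemma \ref{lem:YabRab} with $a=r$ and $b=r+s$, giving
\[
\N^{\pthe}\bigl[1-\expp{-\rho(\theta) Y_r-\lambda(\theta) Y_{r+s}-\eta(\theta) R_{r,r+s}}\bigr]=u^\theta\bigl(\rho(\theta)+\gamma^\theta_s(\lambda(\theta),\eta(\theta)),r\bigr).
\]

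Integrating in $r$ over $(0,\infty)$ and using the second identity in \reff{int-u},
\[
2\beta\int_0^{\infty} u^\theta\bigl(\rho(\theta)+\gamma^\theta_s(\lambda(\theta),\eta(\theta)),r\bigr)\, dr = 2\log\bigl(1+w^\theta(s)\bigr),
\]
with $w^\theta(s)=(\rho(\theta)+\gamma^\theta_s(\lambda(\theta),\eta(\theta)))/(2\theta)$ as defined in the statement. Adding the two contributions and integrating against $\mu(d\theta)$ produces the claimed Laplace functional.

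The only nontrivial input is Lemma \ref{lem:YabRab}, which already does the branching-structure work; the rest is bookkeeping. The main point to be careful about is the separation into the two time regimes, and the fact that for $t\in(-s,0)$ the family has not yet existed at time $-s$ (so $Y_{-s-t}=0$ and $R_{-s-t,-t}$ is undefined but multiplied by $\ind_{\{t<-s\}}=0$); apart from that, applicability of the exponential formula is guaranteed by the non-negativity of $F$, and no further integrability check beyond \reff{eq: con-mu} is needed since the right-hand side already produces a convergent logarithmic integral in $\theta$ under these hypotheses.
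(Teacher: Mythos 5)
Your proposal is correct and follows essentially the same route as the paper: the only key input is Lemma \ref{lem:YabRab} applied to the families born before time $-s$, combined with \reff{int-u} for the two time integrals. The paper merely phrases the split differently (factoring the expectation over the two disjoint time regions by independence of the PPM, rather than splitting the $t$-integral inside a single application of the exponential formula), which is a cosmetic difference.
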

In particular, we deduce that a.s.:
\begin{equation}
   \label{jointzmz}
\rE\left[\expp{- \langle \cz_0, \lam \rangle 
- \langle \cm_s, \eta \rangle }|\cz_{-s} \right]
= \expp{-2 \int_0^\infty \mu(d\theta)
  \log(1+\lambda(\theta) \Delta^\theta_s)}
  \expp{-\langle \cz_{-s},
 \gamma_s^{\cdot}(\lambda(\cdot), \eta(\cdot)) \rangle }. 
\end{equation}

\begin{proof}
   We have:
\begin{align*}
\rE\left[\expp{- \langle \cz_{-s}, \rho \rangle - \langle \cz_0, \lam \rangle 
- \langle \cm_s, \eta \rangle }\right]   
&=A* \rE\left[\expp{-\sum_{i \in I} \ind_{\{-s < t_i \leq
      0\}}\lambda(\theta_i) Y_{-t_i}^i}\right]\\
&= A \expp {-2 \beta \int_0^s dt \int_0^\infty \mu(d \theta)
  u^{\theta}(\lam(\theta), t)},
\end{align*}
with 
\[
A= \rE\bigg[\exp\Big(-\sum_{i \in I} \ind_{\{t_i \leq -s\}}(\rho(\theta_i) Y_{-s -
  t_i}(H^i) + \lam(\theta_i) Y_{-t_i}(H^i)
+ \eta(\theta_i) R_{-s - t_i, -t_i}(H^i) )\Big)\bigg].
\]
Using Lemma \ref{lem:YabRab}, we get:
\begin{align*}
A
&= \exp \left(-2 \beta \int_0^\infty  dt \int_0^\infty \mu(d\theta) 
 \; \N^{\pthe}[1 - \exp(- \rho(\theta) Y_t-\lam(\theta) Y_{t + s}
- \eta(\theta) R_{t,t + s} )]\right)\\
&=  \exp \left(-2 \beta \int_0^\infty  dt \int_0^\infty \mu(d\theta) 
 \; u^\theta(2\theta w^\theta(s),t)\right).
\end{align*}
Then use \reff{int-u} to end the proof. 
\end{proof}

\begin{rem}
   \label{rem:cvPM}
We get from Proposition \ref{prop:ZZM-Laplace}, 
with  $\rho(\theta)=\rho_0$, $\eta(\theta)=\beta s \eta_0$ and
$\lambda=0$, that:
\[
\rE\left[\expp{- \rho_0 Z_{-s} - \beta s \eta_0 M_s }\right]
= \exp\left(-2  \int_0^\infty \mu(d\theta) \,  \log\left(1+
    \frac{\rho_0+ (1-\expp{-\beta s \eta_0})c^\theta (s)}{2\theta}
  \right)\right). 
\]
We get that:
\[
\lim_{s\downarrow 0} \rE\left[\expp{- \rho_0 Z_{-s} - \beta s \eta_0 M_s
  }\right]
=  \expp  {-2  \int_0^\infty \mu(d\theta) \,  \log\left(1+
    \frac{\rho_0+ \eta_0}{2\theta}
  \right)}= \rE\left[\expp{- (\rho_0+\eta_0) Z_{0}  }\right].
\]
We deduce then from the continuity of the process $Z$ 
that the convergence  $\lim_{s \rightarrow 0+} \beta s M_s = Z_0$ holds
in probability. 
\end{rem}

In fact, we shall see that the convergence in the previous Remark is
an a.s. convergence. 

\begin{prop}
   \label{prop:cv-as-sM}
We have a.s.:
\[
\lim_{s \rightarrow 0+} \beta s M_s = Z_0.
\]
\end{prop}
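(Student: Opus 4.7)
My plan is to exploit the Poisson structure that conditioning on $\cz_{-s}$ makes explicit. Specializing Proposition~\ref{prop:ZZM-Laplace} with $\rho\equiv 0$, $\lambda\equiv 0$ and $\eta(\theta)=\beta s\eta_0$ (for a scalar $\eta_0\geq 0$) gives
\[
\rE\bigl[\expp{-\beta s\eta_0 M_s}\bigm|\cz_{-s}\bigr]=\expp{-(1-\expp{-\beta s\eta_0})\langle\cz_{-s},c^{\cdot}(s)\rangle},
\]
so that conditionally on $\cz_{-s}$ the variable $M_s$ is Poisson with parameter $m_s:=\langle\cz_{-s},c^{\cdot}(s)\rangle$. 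I will then prove separately that $\beta s\, m_s\to Z_0$ a.s.\ (the ``deterministic in the conditioning'' part) and that the Poisson fluctuations $\beta s(M_s-m_s)$ vanish a.s.\ along a suitable subsequence $s_n$, and conclude for all $s\to 0$ via monotonicity of $M_s$.

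For the first piece, decompose
\[
\beta s\, m_s=\sum_{i\in I}g_s(\theta_i)\,Y^i_{-s-t_i}\,\ind_{\{t_i<-s\}},\qquad g_s(\theta):=\beta s\, c^\theta(s)=\frac{2\beta s\theta}{\expp{2\beta\theta s}-1}\in(0,1].
\]
Since $g_s(\theta)\to 1$ as $s\to 0$ for every $\theta>0$, and each $Y^i$ is continuous with $Y^i_{\zeta_i}=0$ (the trap property of $D_0$), each summand converges a.s.\ to $Y^i_{-t_i}\,\ind_{\{t_i<0,\zeta_i>-t_i\}}$. Fatou's lemma for the counting measure on $I$ yields $\liminf_{s\to 0}\beta s\, m_s\geq Z_0$ a.s. Conversely, $g_s\leq 1$ gives $\beta s\, m_s\leq Z_{-s}$, and by Theorem~\ref{theo:cont-0} (continuity of $Z$), $Z_{-s}\to Z_0$ a.s., so $\limsup_{s\to 0}\beta s\, m_s\leq Z_0$ a.s.

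For the Poisson fluctuations, pick $s_n=1/n^2$. Conditional Chebyshev gives
\[
\rP\bigl(\val{\beta s_n(M_{s_n}-m_{s_n})}>\varepsilon\bigm|\cz_{-s_n}\bigr)\leq \frac{(\beta s_n)^2 m_{s_n}}{\varepsilon^2}\leq \frac{\beta s_n Z_{-s_n}}{\varepsilon^2},
\]
using $(\beta s_n)^2 m_{s_n}=\beta s_n\cdot(\beta s_n m_{s_n})\leq \beta s_n Z_{-s_n}$. Continuity of $Z$ ensures $W:=\sup_{s\in[0,1]}Z_{-s}<+\infty$ a.s., so $\sum_n(\beta s_n)^2 m_{s_n}\leq \beta W\sum_n s_n<+\infty$ a.s. A conditional Borel--Cantelli argument (via the identity $\E[\sum_n\ind_{A_n}\mid\cG]=\sum_n\rP(A_n\mid\cG)$ for a fixed $\cG$ that makes each $m_{s_n}$ measurable while keeping $M_{s_n}$ Poisson with parameter $m_{s_n}$ under the conditional law) yields $\beta s_n(M_{s_n}-m_{s_n})\to 0$ a.s.; combined with the first piece, $\beta s_n M_{s_n}\to Z_0$ a.s.

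Finally, since $R_{a,b}(H)$ is non-decreasing in $a$ for fixed $b$ (excursions above a higher level refine those above a lower one), the process $s\mapsto M_s$ is non-increasing. For $s\in[s_{n+1},s_n]$ this produces the sandwich
\[
\frac{s_{n+1}}{s_n}\,\beta s_n M_{s_n}\leq \beta s M_s\leq \frac{s_n}{s_{n+1}}\,\beta s_{n+1}M_{s_{n+1}},
\]
and $s_{n+1}/s_n=(n/(n+1))^2\to 1$ forces $\beta s M_s\to Z_0$ a.s. The main obstacle is the conditional Borel--Cantelli step: the events $\{\val{\beta s_n(M_{s_n}-m_{s_n})}>\varepsilon\}$ are not $\cz_{-s_n}$-measurable, so one has to choose a single $\sigma$-algebra (built from all the past-mass data $\cz_{-s_n}$ together with the appropriate portion of the height processes) under which each $M_{s_n}$ remains Poisson with parameter $m_{s_n}$, in order to turn the a.s.\ summable conditional probabilities into a.s.\ finitely many exceptions.
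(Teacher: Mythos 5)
Your overall strategy is the same as the paper's: condition on $\cz_{-s}$ to see that $M_s$ is Poisson with parameter $m_s=\langle \cz_{-s},c^{\cdot}(s)\rangle$, show $\beta s\,m_s\to Z_0$ a.s., kill the Poisson fluctuations along a subsequence, and interpolate by monotonicity of $M_s$. For the deterministic part you replace the paper's truncation of the mutation rate measure at level $q$ (which uses that $\beta s\,c^\theta(s)\to 1$ uniformly on $\{\theta\le q\}$ together with the continuity of the truncated process $Z^q$) by a termwise Fatou argument over the atoms of the Poisson measure; both work, and yours is arguably more direct.

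The gap is exactly where you flag it, and the patch you propose does not work as stated: a single $\sigma$-algebra $\cG$ measuring every $m_{s_n}$ and under which every $M_{s_n}$ remains Poisson with parameter $m_{s_n}$ cannot exist, because measuring $m_{s_m}$ for $m>n$ requires knowing the masses $Y^i_{-s_m-t_i}$, which are functionals of the excursions of $H^i$ above level $-s_n-t_i$; conditioning on them destroys the Poisson character of $R_{-s_n-t_i,-t_i}(H^i)$ (for instance $Y^i_{-t_i}>0$ forces $R_{-s_n-t_i,-t_i}(H^i)\ge 1$). Fortunately no conditional Borel--Cantelli is needed: localize instead. For $K\in\N$ the event $\{Z_{-s_n}\le K\}$ is $\sigma(\cz_{-s_n})$-measurable, so your conditional Chebyshev bound integrates to
\[
\rP\left(\val{\beta s_n(M_{s_n}-m_{s_n})}>\varepsilon,\ Z_{-s_n}\le K\right)
\le \frac{\beta s_n}{\varepsilon^2}\,\rE\bigl[Z_{-s_n}\ind_{\{Z_{-s_n}\le K\}}\bigr]
\le \frac{\beta K s_n}{\varepsilon^2},
\]
which is summable over $n$. (Some such device is essential: the crude unconditional bound $\beta s_n\rE[Z_0]/\varepsilon^2$ is useless because $\rE[Z_0]$ may be infinite, e.g.\ for stable mutation rates.) Ordinary Borel--Cantelli then shows that a.s.\ on $\{\sup_{s\in[0,1]}Z_{-s}\le K\}$ only finitely many of the events occur, and letting $K\to\infty$, using the continuity of $Z$ to guarantee $\sup_{s\in[0,1]}Z_{-s}<\infty$ a.s., finishes the fluctuation step. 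With this repair the rest of your argument (subsequence $s_n=1/n^2$, monotonicity sandwich with $s_{n+1}/s_n\to1$) is correct and in fact supplies details that the paper compresses into the phrase ``properties of Poisson distributions''.
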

Notice the order of $M_s$ do not depend on $\mu$: the non-neutral mutations
do not change the asymptotics of the number of ancestors. 
\begin{proof}
Let $\lambda=0$ and $\eta(\theta)=\eta_0$. We deduce from
\reff{jointzmz} that:
\[
\rE[\expp{ -\eta M_s } | \cz_{-s}]
= \expp{-\langle \cz_{-s}, \gamma_s(0,\eta_0) \rangle}
= \expp{-(1 - \expp{-\eta})\sum_{t_i \leq -s} c^{\theta_i}(s) Y^i_{-s -
    t_i}}.
\]
Therefore, conditionally on $\cz_{-s}$, the number of ancestors $M_s$ is
a   Poisson  random   variable   with  mean   $W_s=\sum_{t_i  \leq   -s}
c^{\theta_i}(s) Y^i_{-s - t_i}$. 

We first prove that a.s.:
\begin{equation}
   \label{eq:cvW}
\lim_{s \rightarrow 0+} \beta s
  W_s=Z_0. 
 \end{equation} 

Notice that the function $x\mapsto x/(\expp{x}-1)$ is decreasing on
$(0,+\infty )$ and is less than 1. We deduce that $\beta s
c^\theta(s)\leq 1$ and therefore $\beta s W_s\leq Z_{-s}$. 

Let $q>0$ and set $Z^q_t=\sum_{i\in I} Y^i_{t-t_i} \ind_{\{\theta_i\leq
  q\}}$.   By
construction, we  have a.s. $Z_0^q\leq Z_0$ and
$\lim_{q\rightarrow+\infty } Z^q_0=Z_0$.   Let $\varepsilon\in (0,1)$. There
exists $q$ such that $Z^q_0\geq (1-\varepsilon) Z_0$. 

There exists $s_0>0$ such that for all $\theta\in (0,q]$, $s\in
(0,s_0)$, we have 
$\beta s
c^\theta(s)\geq 1-\varepsilon$. We deduce that, for $s\in (0,s_0)$:
\[
(1-\varepsilon) 
Z^q_{-s}\leq \beta s W_s\leq Z_{-s}.
\]

The process $Z^q=(Z^q_t, t\in \R) $ is distributed as $Z$ with the
mutation rate measure $\mu^q(d\theta)=\mu(d\theta) \ind_{\{\theta\leq
q\}}$ instead of $\mu$.
Since conditions \reff{eq: con-mu} hold for $\mu$, they also hold
for $\mu^q$. In particular the process $Z^q$ is continuous. We deduce
that a.s.:
\[
(1-\varepsilon)^2 Z_0 \leq (1-\varepsilon) Z^q_0 \leq
\liminf_{s\rightarrow 0+} \beta s W_s
\leq  \limsup_{s\rightarrow 0+} \beta s W_s
\leq Z_0.
\]
Since  $\varepsilon\in  (0,1)$  is  arbitrary, this  implies  that  a.s.
\reff{eq:cvW} holds.  

Recall  that $M_s$ is increasing, is conditionally on
$\cz_{-s}$ a Poisson random variable with mean $W_s$. Then use \reff{eq:cvW} and
properties of Poisson distributions to get that a.s.: 
\[
\lim_{s \rightarrow 0+} \beta s M_s/\beta s W_s=1.
\]
This and \reff{eq:cvW} end the proof. 
\end{proof}

We have the following partial result on the fluctuations. 

\begin{theo}
   \label{theo:fluct}
We have the convergence in distribution of $((Z_0,
(Z_0-Z_{-s})/\sqrt{\beta s}), s\geq 0)$ towards $(Z_0, \sqrt{2 Z_0}
\,G)$ with $G$ a standard Gaussian random variable independent of $Z_0$.

Under the conditions:
\begin{equation}
   \label{eq:cond-fluc}
\lim_{A\rightarrow+\infty }
\sqrt{A}\int _A^\infty  \frac{\mu(d\theta)}{\theta}=0
\quad\text{and}\quad
\lim_{A\rightarrow+\infty }
\inv{\sqrt{A}}\int_0^A\mu(d\theta)=0,
\end{equation}
we    have    the     convergence    in    distribution    of    $((Z_0,
(Z_0-Z_{-s})/\sqrt{\beta  s},  (\beta  s M_s  -Z_{-s})/\sqrt{\beta  s}),
s\geq 0)$  towards $(Z_0, \sqrt{  Z_0} \,(G+G'), \sqrt{ Z_0}  \,G)$ with
$G$  and  $G'$  two   independent  standard  Gaussian  random  variables
independent of $Z_0$.
\end{theo}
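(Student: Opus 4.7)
The approach is to compute the joint characteristic function using Proposition \ref{prop:ZZM-Laplace} with suitably chosen complex parameters, then take the limit $s\downarrow 0$ by dominated convergence. For Part 1, I would specialize Proposition \ref{prop:ZZM-Laplace} to $\eta\equiv 0$ with constants $\lambda(\theta)=\lambda_0$, $\rho(\theta)=\rho_0$. The identity $(1+\lambda_0\Delta^\theta_s)u^\theta(\lambda_0,s)=\lambda_0(1-2\theta\Delta^\theta_s)$ gives, after direct algebraic simplification,
\[
\rE\left[\expp{-\rho_0 Z_{-s}-\lambda_0 Z_0}\right]=\exp\left(-2\int_0^\infty \mu(d\theta)\,\log\left(1+\frac{\rho_0+\lambda_0}{2\theta}+\frac{\lambda_0\rho_0\,\Delta^\theta_s}{2\theta}\right)\right).
\]
Substituting $\lambda_0=\alpha-i\tau/\sqrt{\beta s}$ and $\rho_0=i\tau/\sqrt{\beta s}$ (with $\alpha\geq 0$, $\tau\in\R$), one has $\rho_0+\lambda_0=\alpha$ while $\lambda_0\rho_0\,\Delta^\theta_s\to\tau^2$ pointwise in $\theta$ (since $\Delta^\theta_s/(\beta s)\to 1$). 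Using $\Delta^\theta_s\leq\min(\beta s,1/(2\theta))$ together with \reff{eq: con-mu} to justify dominated convergence, the limit reduces to $\rE[\expp{-(\alpha+\tau^2)Z_0}]$. Conditioning on $Z_0$ identifies this as $\rE[\expp{-\alpha Z_0+i\tau\sqrt{2Z_0}G}]$, which yields Part 1.

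For Part 2 I would carry out the analogous substitution including $\eta$, choosing $\lambda_0=\alpha-i\tau/\sqrt{\beta s}$, $\rho_0=i(\tau+\tau')/\sqrt{\beta s}$, $\eta_0=-i\tau'\sqrt{\beta s}$, so that $-\rho_0 Z_{-s}-\lambda_0 Z_0-\eta_0 M_s$ becomes precisely $-\alpha Z_0+i\tau(Z_0-Z_{-s})/\sqrt{\beta s}+i\tau'(\beta s M_s-Z_{-s})/\sqrt{\beta s}$. Extending the Part 1 algebraic identity and using the expansions $1-\expp{-\eta_0}=-i\tau'\sqrt{\beta s}+\tau'^2\beta s/2+O(s^{3/2})$ and $c^\theta(s)=1/(\beta s)-\theta+O_\theta(s)$ (from \reff{eq:c_q}), one finds
\[
(1-\expp{-\eta_0})\,c^\theta(s) = -\frac{i\tau'}{\sqrt{\beta s}}+\frac{\tau'^2}{2}+O_\theta(\sqrt{s}).
\]
The singular $1/\sqrt{s}$ contributions in $\rho_0$ and in $\expp{-\eta_0}\lambda_0$ then cancel against $(1-\expp{-\eta_0})c^\theta(s)$, and tracking the remaining terms shows that $(1+\lambda_0\Delta^\theta_s)(1+w^\theta(s))$ converges pointwise in $\theta$ to $1+(\alpha+\tau^2+\tau\tau'+\tau'^2/2)/(2\theta)$. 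Integrating against $\mu$ yields $\rE[\expp{-(\alpha+\tau^2+\tau\tau'+\tau'^2/2)Z_0}]$, which matches $\rE[\expp{-\alpha Z_0+i\tau\sqrt{Z_0}(G+G')+i\tau'\sqrt{Z_0}G}]$ by conditioning on $Z_0$.

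\textbf{Main obstacle.} The principal difficulty is justifying the passage to the limit in Part 2 via dominated convergence on the $\mu$-integral. After the singular $1/\sqrt{s}$ terms cancel, two types of residual errors must be controlled: one from the failure of the expansion $c^\theta(s)\approx 1/(\beta s)-\theta$ when $\theta s$ is not small (large-$\theta$ regime), and one from subleading contributions of order $\theta\sqrt{s}$ entering via $(1-\expp{-\eta_0})c^\theta(s)$ (intermediate-$\theta$ regime). With a heuristic cutoff $A\sim 1/s$, the first condition in \reff{eq:cond-fluc}, $\sqrt{A}\int_A^\infty\mu(d\theta)/\theta\to 0$, controls the former, while the second condition, $(1/\sqrt{A})\int_0^A\mu(d\theta)\to 0$, controls the latter. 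Establishing a $\mu$-integrable dominating function independent of $s$ small, precisely under these two conditions, is the technical core of Part 2 and the main obstacle to a fully rigorous argument.
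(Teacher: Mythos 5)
Your overall strategy --- start from Proposition \ref{prop:ZZM-Laplace}, simplify the argument of the logarithm to
$1+\bigl(\rho_0+\lambda_0+\rho_0\lambda_0\Delta^\theta_s+(1-\expp{-\eta_0})c^\theta(s)\bigr)/(2\theta)$,
arrange the cancellation of the $1/\sqrt{s}$ singularities, and pass to the limit under the integral --- is exactly the paper's, and your algebra and the identification of the limiting transform $\rE[\expp{-(\alpha+\tau^2+\tau\tau'+\tau'^2/2)Z_0}]$ are correct. One step you gloss over: Proposition \ref{prop:ZZM-Laplace} is proved for \emph{non-negative} $\rho,\lambda,\eta$, and your substitution makes them complex. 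The extension to complex arguments with non-negative real part is true but must be justified (e.g.\ via $|1-\expp{-z}|\leq\min(2,|z|)$ for $\mathrm{Re}\,z\geq0$, the bound $\N^{\psi_\theta}[\,|1-\expp{-f}|\,]\leq\min\bigl(2c^\theta(t),\,C\expp{-2\beta\theta t}\bigr)$ and conditions \reff{eq: con-mu}, or by analytic continuation in the parameters). The paper sidesteps this by keeping all parameters real: it first takes $\rho>\max(\lambda,\eta)/2\sqrt{\beta s}$ so the exponent is genuinely non-positive and the Laplace formula applies, and then uses analyticity in $(\lambda,\eta)$ of both sides to extend the identity to fixed $\rho>\lambda^2+\lambda\eta+\eta^2/2$ before letting $s\to0$.

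The main issue is the one you flag yourself: the dominated-convergence step in Part 2. This is not optional polish --- it is the only place where the hypotheses \reff{eq:cond-fluc} enter, so without it Part 2 is not proved. The paper does carry it out: writing the argument of the logarithm as $1+\sigma/(2\theta)+\Lambda_s(\theta)$, it establishes the two-sided bounds \reff{eq:majo}--\reff{eq:mino}, namely $|\Lambda_s(\theta)|\leq C\bigl(\sqrt{s}/\theta+\min(1/(\theta\sqrt{s}),\sqrt{s})\bigr)$, deduces
$\bigl|\log\bigl(1+2\theta\Lambda_s(\theta)/(\sigma+2\theta)\bigr)\bigr|\leq C\sqrt{s}\,\ind_{\{\theta\leq 1/s\}}+C(\theta\sqrt{s})^{-1}\ind_{\{\theta>1/s\}}$,
and integrates against $\mu$ using precisely \reff{eq:cond-fluc} with $A=1/s$. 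Your diagnosis (cutoff at $\theta\sim1/s$, the first condition controlling the large-$\theta$ tail through $\rho_0/(2\theta)$, the second controlling the $O(\theta\sqrt{s})$ mismatch between $(1-\expp{-\eta_0})c^\theta(s)$ and $-\rho_0$ for $\theta\leq1/s$) is exactly right, so the gap closes along the lines you sketch; but as written the proposal stops just short of the estimate that turns the plan into a proof.
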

If conditions
\reff{eq:cond-fluc} do not hold, we may  have a very different
behavior, see the stable case in Section \ref{sec:stable-mut}. 

\begin{proof}
Let $\eta, \lambda\geq 0$, $s\in (0, \min(1/2,1/(4\beta \eta^2)))$ and $\rho>
\max(\lambda,\eta)/2\sqrt{\beta s}$. 
   We deduce from Proposition \ref{prop:ZZM-Laplace} that:
\begin{multline}
\label{eq:exp}
\rE\left[\exp{\left(- \rho Z_{-s} - \frac{\lambda}{\sqrt{\beta
        s}}\left(Z_0-Z_{-s}\right) -  \frac{\eta}{\sqrt{\beta
        s}}\left(\beta s M_s -Z_{-s}\right) \right)}\right]\\
= \exp\left(-2  \int_0^\infty \mu(d\theta) \, \log\left(1+
      \frac{\sigma}{2\theta} + \Lambda_s(\theta)  \right)\right) ,   
\end{multline}
with 
\[
\sigma=\rho - (\lambda^2 + \lambda \eta
        +\frac{\eta^2}{2}), 
\]
\[
\Lambda_s(\theta) = \frac{\lambda(\lambda+\eta)}{2\theta} \left(1-
  \frac{\Delta^\theta_s}{ \beta s} \right) +
\frac{\rho\lambda\sqrt{\beta s} }{2\theta}
\frac{\Delta^\theta_s}{\beta s }
+ \frac{\eta}{2\theta\sqrt{\beta s}} A(\eta \sqrt{\beta s }, 2\beta
\theta s) ,
\]
and
\[
A(x,y)= \frac{1-\expp{-x}}{x}\frac{y}{\expp{y} -1} - 1 + \frac{x}{2}\cdot
\]
We have for $y>0$ and $x\in (0, 1/2)$:
\[
- \min(1,y)\leq \left(1-\frac{x}{2} \right)\left(\frac{y}{\expp{y} -1}-1 \right)\leq
A(x,y) \leq  \frac{x^2}{6}\cdot
\]
Recall that for $x\geq 0$, we have $x -x^2/2\leq 1-\expp{-x}\leq x$. 
 We deduce that:
\begin{equation}
   \label{eq:majo}
\Lambda_s(\theta) \leq  \frac{\lambda(\lambda+\eta)}{2\theta} \min(1,
\beta \theta s)+
\frac{\rho\lambda\sqrt{\beta s} }{2\theta} + \frac{\eta^3 \sqrt{\beta s}
}{12\theta}
\leq  \frac{\lambda(\lambda+\eta)+(\rho \lambda +\eta^3) \sqrt{\beta} }{2\theta}\cdot
\end{equation}
We also have:
\begin{equation}
   \label{eq:mino}
\Lambda_s(\theta) \geq  - 
\frac{\eta}{2\theta\sqrt{\beta s}}  \min(1, 2\beta\theta    s)
\geq  - 
\eta \min\left(\inv{2\theta\sqrt{\beta s}} , \sqrt{\beta s}
\right). 
\end{equation}
In particular, we have:
\begin{equation}
   \label{eq:limA}
\lim_{s\rightarrow 0} \Lambda_s(\theta)=0. 
\end{equation}

Let $M_0>0$ large, $\varepsilon_0>0$  small, and $s_0>0$ small enough such
that  $\varepsilon_0   -  M_0^2  \sqrt{\beta   s_0}>0$  and  $M_0\sqrt{\beta
  s_0}<1/2$.  Set $I_0=[0,M_0]^3\bigcap  \{\rho- (\lambda^2+\lambda  \eta +
(\eta^2/2))>\varepsilon_0\}$. Notice that $\Lambda_s(\theta) $ is analytic in
$(\rho,\lambda,\eta)$. 
We deduce that the integral
\[
F_s(\rho,\lambda,\eta)=\int_0^\infty \mu(d\theta) \,  \log\left(1+
      \frac{\sigma}{2\theta} + \Lambda_s(\theta)  \right)
\]
is   well   defined   in   $(\rho,\lambda,\eta)\in I_0$
for all $s\in (0,s_0]$.  For fixed $\rho>\varepsilon_0>0$, it is not
difficult to check that there exists $s_1>0$ smaller than $s_0$ such
that for $s\in (0, s_1]$, $F_s(\rho,\lambda,\eta)$ is also analytic in
$(\lambda,\eta)$ such that
$\lambda^2+\lambda\eta+\eta^2/2<\rho-\varepsilon_0$.  We
deduce that \reff{eq:exp} is valid for $\rho> \lambda^2+\lambda \eta +
(\eta^2/2)$ and $s$ small (and not only for 
$\rho>\max(\lambda,\eta)/2\sqrt{\beta s}$).\\

If $\eta=0$, we have $0\leq \Lambda_s(\theta) \leq  \lambda(\lambda+
\rho \sqrt{\beta})/(2\theta)$. By dominated convergence, we get, using
\reff{eq:limA}, that for $\rho>\lambda^2$:
\[
\lim_{s\rightarrow 0} \int_0^\infty \mu(d\theta) \,  \log\left(1+
      \frac{\rho - \lambda^2 }{2\theta} + \Lambda_s(\theta)  \right)
= \int_0^\infty \mu(d\theta) \,  \log\left(1+
      \frac{\rho - \lambda^2}{2\theta} \right) .
\]
Thanks to \reff{eq:exp}, we obtain that for $\rho>\lambda^2$:
\[
\lim_{s\rightarrow 0} 
\rE\left[\expp{- \rho Z_{-s} - \frac{\lambda}{\sqrt{\beta
        s}}\left(Z_0-Z_{-s}\right) }\right]
= \expp{-2 \int_0^\infty \mu(d\theta) \,  \log\left(1+
      \frac{\rho - \lambda^2}{2\theta} \right)}
= \rE[\expp{- \rho Z_0 -\lambda \sqrt{2 Z_0}\, G}],  .
\]
with $G$ a standard Gaussian random variable independent of $Z_0$. 
This implies the convergence in distribution of the sequence $((Z_{-s},
(Z_0-Z_{-s})/\sqrt{\beta s}), s\geq 0)$ towards $(Z_0, \sqrt{2 Z_0}
G)$. Then use that $Z$ is continuous to get the first part of the
Theorem. \\

We assume $\eta>0$.  Let $(\rho,\lambda,\eta) \in 
I_0$. Set 
\[
F_0(\rho,\lambda,\eta)=\int_0^\infty \mu(d\theta) \,  \log\left(1+
      \frac{\sigma}{2\theta}  \right).
\]
We shall prove that under \reff{eq:cond-fluc}, we have $
\lim_{s\rightarrow 0} F_s(\rho,\lambda,\eta)=F_0(\rho,\lambda,\eta)$. 
Notice that $\sigma>\varepsilon_0>0$. We have:
\[
H_s=\val{F_s(\rho,\lambda,\eta) -F_0(\rho,\lambda,\eta)}
\leq \int_0^\infty \mu(d\theta) \,
\val{\log\left(1+\frac{2\theta\Lambda_s(\theta)}{\sigma+ 2\theta}  \right)}.
\]
We shall denote by $C_k$ for $k\in \N$ some finite positive constants which
depend only on $M_0, \varepsilon_0$ and $s_0$. 
We deduce from \reff{eq:majo} and  \reff{eq:mino} that for all
$(\rho,\lambda,\eta)\in I_0$ and $s\in (0,s_0]$: 
\[
\val{\Lambda_s(\theta)}\leq C_1\left(\frac{\sqrt{s}}{\theta}+
 \min\left(\inv{\theta\sqrt{s}}, \sqrt{s}\right)\right).
\]
Thus, there exists $s_1>0$ small enough and less than $s_0$ such that for all
$(\rho,\lambda,\eta)\in I_0$ and $s\in (0,s_1]$: 
\[
\val{\log\left(1+\frac{2\theta\Lambda_s(\theta)}{\sigma+ 2\theta}  \right)}
\leq  C_2 \sqrt{s}\ind_{[0,1/s]}(\theta)+
 C_3 \inv{\theta \sqrt{s}}\ind_{[1/s,+\infty )}(\theta).
\]
Then, we deduce from \reff{eq:cond-fluc} that $\lim_{s\rightarrow 0}
H_s=0$. This implies that:
\begin{align*}
\lim_{s\rightarrow 0}
\rE\left[\expp{- \rho Z_{-s} - \frac{\lambda}{\sqrt{\beta
        s}}\left(Z_0-Z_{-s}\right) -  \frac{\eta}{\sqrt{\beta
        s}}\left(\beta s M_s -Z_{-s}\right) }\right]
&= \expp{-2 \int_0^\infty \mu(d\theta) \,  \log\left(1+
      \frac{\sigma }{2\theta} \right)}\\
&= \rE\left[\expp{- \sigma Z_0}\right]\\
&= \rE\left[\expp{- 
(\rho - (\lambda^2 + \lambda \eta
        +\frac{\eta^2}{2})) Z_0}\right]\\
&=\rE\left[\expp{- \rho Z_0
    -\lambda \sqrt{Z_0}\, G - (\lambda+\eta)\sqrt{Z_0}\, G'}\right],
\end{align*}
with $G'$ distributed as $G$ and independent of $Z_0$ and $G$. 
This gives the second part of the Theorem. 
\end{proof}

We have the following representation of the limit in Theorem
\ref{theo:fluct}.

\begin{lem}
   \label{lem:Z-Z} Let $G$ be a standard Gaussian random variable
   independent of $Z_0$. 
We have that $\sqrt{Z_0}\, G$ is distributed as $Z'_0-Z''_0$, with $Z'_0$
and $Z''_0$ independent and distributed as $Z_0$ with mutation rate
measure $\mu'$ defined by $\langle \mu', \varphi \rangle=\int_0^\infty
\mu(d\theta) \varphi(\theta^2)$. 
\end{lem}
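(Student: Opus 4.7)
The plan is to verify the identity in distribution by computing and matching the characteristic functions of $\sqrt{Z_0}\,G$ and of $Z'_0-Z''_0$; since both are real-valued (the first symmetric by symmetry of $G$, the second by the symmetry of $Z'_0, Z''_0$), this is enough to identify their laws.

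For the left-hand side, I would condition on $Z_0$. Since $G$ is a standard Gaussian independent of $Z_0$, one has $\rE[\expp{i\xi\sqrt{Z_0}\,G}\mid Z_0] = \expp{-\xi^2 Z_0/2}$, so taking expectation and applying \reff{eq:lap-Z} at $\lam = \xi^2/2$ gives
\[
\rE[\expp{i\xi\sqrt{Z_0}\,G}] = \rE[\expp{-\xi^2 Z_0/2}] = \exp\!\left(-2\int_0^\infty \mu(d\theta)\,\log\!\left(1+\frac{\xi^2}{4\theta}\right)\right).
\]

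For the right-hand side, the Laplace transform \reff{eq:lap-Z} with $\mu$ replaced by $\mu'$ defines the law of a non-negative variable $Z'_0$ and therefore extends by analytic continuation to the right half-plane $\{\mathrm{Re}(\lam)\geq 0\}$. Evaluating at $\lam = -i\xi$ on the principal branch yields $\rE[\expp{i\xi Z'_0}] = \exp(-2\int_0^\infty \mu'(d\theta)\,\log(1 - i\xi/(2\theta)))$. Independence of $Z'_0$ and $Z''_0$ and the principal-branch identity $\log(1-i\xi/(2\theta)) + \log(1+i\xi/(2\theta)) = \log(1+\xi^2/(4\theta^2))$ (valid since both factors lie in the right half-plane, so no branch ambiguity arises) then give
\[
\rE[\expp{i\xi(Z'_0-Z''_0)}] = |\rE[\expp{i\xi Z'_0}]|^2 = \exp\!\left(-2\int_0^\infty \mu'(d\theta)\,\log\!\left(1+\frac{\xi^2}{4\theta^2}\right)\right).
\]

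Comparing the two expressions, the identification of the laws reduces to the identity $\int_0^\infty \mu'(d\theta)\,\log(1+\xi^2/(4\theta^2)) = \int_0^\infty \mu(d\theta)\,\log(1+\xi^2/(4\theta))$, which is precisely the change-of-variable relating $\mu$ and $\mu'$: applying the defining relation to the test function $\varphi(u) = \log(1+\xi^2/(4u^2))$, the squaring inside the logarithm exactly cancels the squaring in the definition of $\mu'$, producing the $\mu$-integral on the left-hand side. The overall argument is a short Laplace-transform computation; the only mild technical point is the analytic continuation of the Laplace transform of $Z'_0$ to the imaginary axis, which is standard for non-negative random variables and does not constitute a real obstacle.
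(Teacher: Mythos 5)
Your overall strategy --- identifying the two laws by matching transforms computed from \reff{eq:lap-Z} --- is exactly the paper's. The only methodological difference is that you use characteristic functions and continue the Laplace transform analytically to the imaginary axis, whereas the paper first truncates $\mu$ away from $0$ (so that $Z_0$ has a two-sided Laplace transform on a real neighbourhood of the origin, where the whole computation can be done with real $\lambda$) and then removes the truncation by a limit in distribution. Both devices work; yours trades the final limiting step for a (routine, but worth one sentence) continuation argument: one should note that $\lambda\mapsto\int_0^\infty\mu'(d\theta)\log(1+\lambda/(2\theta))$ is analytic on $\{\mathrm{Re}(\lambda)>0\}$ and continuous up to the imaginary axis, which uses conditions \reff{eq: con-mu} for $\mu'$.

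There is, however, a genuine error in your last step, which is the one place where care was required. With the definition $\langle \mu', \varphi \rangle=\int_0^\infty \mu(d\theta)\, \varphi(\theta^2)$ and the test function $\varphi(u)=\log(1+\xi^2/(4u^2))$, the defining relation gives
\[
\int_0^\infty \mu'(du)\,\log\left(1+\frac{\xi^2}{4u^2}\right)
=\int_0^\infty \mu(d\theta)\,\log\left(1+\frac{\xi^2}{4\theta^4}\right),
\]
i.e.\ the two squarings \emph{compound} to $\theta^4$ rather than cancel, and this is not $\int_0^\infty \mu(d\theta)\log(1+\xi^2/(4\theta))$. The identity you need holds precisely when $\mu'$ is the image of $\mu$ under $\theta\mapsto\sqrt{\theta}$, that is $\langle \mu',\varphi\rangle=\int_0^\infty\mu(d\theta)\,\varphi(\sqrt{\theta})$. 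The lemma as printed evidently carries a typo in the direction of this change of variables (the paper's own one-line verification implicitly uses the square-root image too), so your argument goes through verbatim once $\mu'$ is read correctly; but as submitted, the asserted cancellation is false, and actually carrying out the substitution --- rather than asserting that it ``exactly cancels'' --- would have exposed the discrepancy and led you to flag the statement.
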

\begin{proof}
   Assume first there exists $\theta_0>0$ such that
   $\mu((0,\theta_0])=0$. Then, we deduce from \reff{eq:lap-Z} that
   $Z_0$ has positive exponential moments, that is \reff{eq:lap-Z}
   holds for $\lambda\geq -\theta_0$. We obtain for $\val{\lambda}\leq
   \theta_0$: 
\[
\rE[\expp{-\lambda (Z'_0- Z''_0)}]
= \exp\left(-2    \int_0^{\infty}
  \mu'(d\theta)\, \log\left(1-\frac{\lambda^2}{4\theta^2}\right)\right)
= \rE[\expp{-\lambda \sqrt{ Z_0}\, G}].
\]

To conclude, use that $Z_0$ is the limit in distribution of $Z_0$ 
with mutation rate
measure $\ind_{\{\theta\geq \theta_0\}}\mu(d\theta)$ as $\theta_0$ goes
down to $0$. 
\end{proof}

\section{Stable mutation rate measure}
\label{sec:stable-mut}

Let $c>0$ and $\alpha\in (0,1)$. 
In this Section, we will consider the stable mutation rate: 
\[
\mu(d\theta) = c  \theta^{\alpha - 1} \, \ind_{\{\theta > 0\}}\, d\theta.
\]
Notice that $\mu$ satisfies conditions \reff{eq: con-mu}. In addition, notice that
$\rE[Z_0]=+\infty $ and that
$\langle \mu,1 \rangle=+\infty $ which in turn implies that $\{t;  Z_t  =  0\}  =
  \emptyset$ a.s. thanks to Proposition \ref{prop:Zt=0}.

\subsection{Bottleneck effect}

We present a drastic bottleneck effect which was not observed
in~\cite{cd:spstsbp}.  
\begin{lem}
We have $\rE[Z^A] = +\infty$ if  $\alpha\in (0, 1/2]$ and 
$\rE[Z^A] < +\infty$ if  $\alpha\in (1/2, 1)$.
\end{lem}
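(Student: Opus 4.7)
The plan is to substitute the stable density $\mu(d\theta)=c\theta^{\alpha-1}\ind_{\{\theta>0\}}\,d\theta$ into the explicit formula \reff{expeza} and reduce $\rE[Z^A]$ to a one-dimensional integral in $t$ through the scaling substitution $x=2\beta\theta t$ inside each of the three $\theta$-integrals appearing there.

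First I would compute the three building blocks. Using $x=2\beta\theta t$ and the assumption $\alpha\in(0,1)$, one obtains
\begin{align*}
\int_0^\infty \frac{2\theta}{\ebt-1}\,\mu(d\theta) &= K_1\,t^{-\alpha-1},\\
\int_0^\infty \frac{1-\expp{-2\beta\theta t}}{\theta}\,\mu(d\theta) &= K_2\,t^{1-\alpha},\\
-2\int_0^\infty \log(1-\expp{-2\beta\theta t})\,\mu(d\theta) &= K_3\,t^{-\alpha},
\end{align*}
with strictly positive finite constants $K_1,K_2,K_3$ depending on $\alpha,\beta,c$. The associated dimensionless integrals $\int_0^\infty x^\alpha/(\expp{x}-1)\,dx=\Gamma(\alpha+1)\zeta(\alpha+1)$, $\int_0^\infty x^{\alpha-2}(1-\expp{-x})\,dx=\Gamma(\alpha)/(1-\alpha)$, and $\int_0^\infty x^{\alpha-1}|\log(1-\expp{-x})|\,dx$ all converge since $\alpha\in(0,1)$: near $0$ the first is $\sim x^{\alpha-1}$, the second is $\sim x^{\alpha-1}$, and the third is $\sim x^{\alpha-1}|\log x|$; near $\infty$ each integrand has exponential decay.

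Substituting these three identities into \reff{expeza} collapses $\rE[Z^A]$ to a constant multiple of the single integral
\[
J(\alpha)=\int_0^\infty t^{-2\alpha}\exp\!\left(-K_3\,t^{-\alpha}\right)dt,
\]
so proving the lemma reduces to analyzing $J(\alpha)$. As $t\to 0^+$, the factor $\exp(-K_3 t^{-\alpha})$ decays faster than any polynomial, so the integrand is integrable on $(0,1]$ for every $\alpha\in(0,1)$. As $t\to+\infty$, the exponential tends to $1$ and the integrand is equivalent to $t^{-2\alpha}$, which is integrable at infinity if and only if $2\alpha>1$. Hence $J(\alpha)<+\infty$ iff $\alpha\in(1/2,1)$, proving the dichotomy. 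Equivalently, the substitution $u=K_3 t^{-\alpha}$ converts $J(\alpha)$ into $\alpha^{-1}K_3^{1/\alpha-2}\int_0^\infty u^{1-1/\alpha}\expp{-u}\,du$, whose convergence at $u=0$ is the same condition $1-1/\alpha>-1$, i.e.\ $\alpha>1/2$.

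The only non-routine point is isolating which end of the $t$-integral is responsible for the divergence: the bottleneck factor $\expp{-K_3 t^{-\alpha}}$ kills any problem at small $t$ (which is intuitively the regime of very recent, small common ancestors), while the large-$t$ behavior is governed purely by the algebraic $t^{-2\alpha}$ coming from the product of the $\theta$-scalings, and it is precisely the exponent $2\alpha$ that must exceed $1$ for integrability. Once the three scale-invariance identities are in hand, the rest is immediate.
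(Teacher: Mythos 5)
Your proposal is correct and follows essentially the same route as the paper: substitute the stable measure into \reff{expeza}, use the scaling $x=2\beta\theta t$ to reduce each $\theta$-integral to a constant times a power of $t$, and analyze the resulting one-dimensional integral $\int_0^\infty t^{-2\alpha}\expp{-K_3 t^{-\alpha}}\,dt$, whose finiteness is exactly the condition $\alpha>1/2$ (the paper uses the same final substitution $u=K_3t^{-\alpha}$ that you offer as an alternative). One cosmetic slip: the integrand $x^{\alpha-2}(1-\expp{-x})$ decays only like the power $x^{\alpha-2}$ at infinity, not exponentially, but this is still integrable since $\alpha-2<-1$, so the conclusion is unaffected.
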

In the case  $\alpha\in (1/2, 1)$,  we observe a  drastic
bottleneck effect as  $\rE[Z^A]/\rE[Z_0] = 0$.  

\begin{proof}
By \eqref{expeza}, we have:
\begin{equation*} 
\rE[Z^A]=2 \beta c^2 \int_0^\infty dt \int_0^\infty \frac{2\theta^\alpha}{\ebt - 1}d\theta 
\int_0^\infty \frac{1 - \expp{- 2 \beta \theta' t}}{\theta'^{2 - \alpha}} d\theta'
\expp{2 c \int_0^\infty \log(1 - \expp{-2 \beta \theta'' t})\theta''^{\alpha - 1} d\theta''}.
\end{equation*}
We get:
\[
2 \beta c^2  \int_0^\infty \frac{2\theta^\alpha}{\ebt -
  1}d\theta = C_1 t^{-1-\alpha}
\quad\text{with}\quad
C_1 =4\beta c^2 \int_0^\infty
\frac{x^\alpha}{\expp{2\beta x}  -
  1}dx, 
\]
\[
2\beta c^2 \int_0^\infty  \frac{1 - \expp{- 2 \beta \theta' t}}{\theta'^{2 - \alpha}}
d\theta' = C_2 t^{1 - \alpha}
\quad\text{with}\quad
C_2 = 2\beta c^2 \int_0^\infty\frac{1 - \expp{-2 \beta x}}{x^{2 - \alpha}} dx,
\]
and
\begin{equation}
   \label{eq:C3}
2 c \int_0^\infty \log(1 - \expp{-2 \beta \theta'' t})\theta''^{\alpha - 1} 
d\theta'' = -C_3 t^{-\alpha} 
\quad\text{with}\quad
C_3 = -2 c \int_0^\infty \log(1 - \expp{-2 \beta x})x^{\alpha - 1}\,dx.
\end{equation}
Notice that $C_1$, $C_2$  and $C_2$ are positive finite constants. 
We deduce that:
\[
\rE[Z^A] 
= C_1 C_2 \int_0^\infty dt\,\,  t^{ - 2\alpha} \expp{-C_3 t^{-\alpha}}
= \frac{C_1C_2}{\alpha}\int_0^\infty dr\,\,  r^{ 1- \inv{\alpha}}
\expp{-C_3 r}. 
\]
This implies that  $\rE[Z^A] = +\infty$ if  $\alpha\in (0, 1/2]$ and
that 
$\rE[Z^A] < +\infty$ if  $\alpha\in (1/2, 1)$.
\end{proof}

\subsection{Type of the MRCA and type of a random individual}

Recall  from Section  \ref{sec:prelim}  that a  CB process  $Y^\theta$ has  type
$\theta$  and that  $Y^\theta$ is  stochastically larger  than  $Y^q$ if
$\theta\leq q$.  We shall  say that the  type (or mutation)  $\theta$ is
more advantageous than the type $q$. The following proposition states that
the type of  the MRCA is (stochastically) more  advantageous than the type
of an individual taken at random at the current time.

Recall from  Section \ref{sec:defZ}  that the type  of an  individual in
family  $Y^i$  is  $\theta_i$.  We  define the  type  $\Theta_*$  of  an
individual taken  at random at time  $0$ as follows: conditionally on
$\cz$,   $\Theta_*$   is    equal   to   $\theta_i$   with   probability
$Y^i_{-t_i}/Z_0$.

\begin{prop}
   \label{prop:fitness}
We have that $\Theta$ is stochastically smaller than $\Theta_*$: for all
$q\geq 0$, 
\[
\rP(\Theta_* \leq q)\leq  \rP(\Theta\leq  q).
\]
\end{prop}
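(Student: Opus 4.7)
The plan is to compute $\rP(\Theta\le q)$ and $\rP(\Theta_*\le q)$ explicitly in the stable case, using the machinery from Sections~\ref{sec:Q} and~\ref{sec:defZ}, and then compare the two expressions.

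First, for $\Theta$: combining Lemma~\ref{lem:TMRCA} with the conditional law $\mu_t^{\mathrm{MRCA}}$ from Section~\ref{sec:Q} and specializing to $\mu(d\theta)=c\theta^{\alpha-1}d\theta$, the change of variable $y=2\beta\theta t$ rescales the conditional law of $\Theta$ given $A=t$ into a $t$-independent density. This gives
\[
\rP(\Theta\le q)=\rE\bigl[\tilde F_\alpha(2\beta q A)\bigr],\qquad \tilde F_\alpha(y)=\frac{1}{C_{00}}\int_0^y\frac{x^\alpha}{e^x-1}\,dx,
\]
with $C_{00}=\int_0^\infty x^\alpha/(e^x-1)\,dx$ and $A^{-\alpha}\sim\mathrm{Exp}(C_3)$ (already seen in Section~\ref{sec:stable-mut}).

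For $\Theta_*$: by the definition, $\rP(\Theta_*\le q\mid \cz_0)=Z_0^{\le q}/Z_0$, where $Z_0^{\le q}$ and $Z_0^{>q}$ denote the contributions to $Z_0$ of atoms with $\theta_i\le q$ and $\theta_i>q$; these are independent by the Poisson structure. Writing $1/Z_0=\int_0^\infty e^{-uZ_0}du$ and invoking \reff{eq:lap-Z} yields
\[
\rP(\Theta_*\le q)=\int_0^\infty\Psi'_{\le q}(u)\,e^{-\Psi(u)}\,du,
\]
with $\Psi_{\le q}(u)=2\int_0^q\log(1+u/(2\theta))\,\mu(d\theta)$. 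The change of variable $\theta=uy/2$ brings this to $\rE[\tilde G_\alpha(2q/U)]$, where $\tilde G_\alpha(y)=(C_0')^{-1}\int_0^y x^{\alpha-1}/(1+x)\,dx$, $C_0'=\pi/\sin(\pi\alpha)$, and $U=E/Z_0$ with $E\sim\mathrm{Exp}(1)$ independent of $Z_0$. The classical Mellin identity $E/\tilde Z\stackrel{d}{=}\xi^{1/\alpha}$ (for $\tilde Z$ standard positive $\alpha$-stable and $\xi\sim\mathrm{Exp}(1)$) then shows $U^\alpha\sim\mathrm{Exp}(K)$.

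With both scale variables having $\alpha$-th powers exponentially distributed, an integration by parts reduces the claim $\rP(\Theta>q)\le\rP(\Theta_*>q)$ to the analytic inequality
\[
\int_0^\infty\frac{r^\alpha}{C_{00}(e^r-1)}\,e^{-\gamma C_{00}q^\alpha/r^\alpha}\,dr\;\le\;\int_0^\infty\frac{r^{\alpha-1}}{C_0'(1+r)}\,e^{-\gamma C_0'q^\alpha/r^\alpha}\,dr,\qquad \gamma=\frac{2c}{\alpha},
\]
for all $q>0$. The main obstacle is this last inequality. The two density kernels agree up to constants as $r\to 0$ (both behave as $r^{\alpha-1}$) but differ qualitatively as $r\to\infty$ (exponential versus polynomial decay); this asymmetry is precisely the source of the stochastic ordering. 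However, $C_{00}<C_0'$ gives the right-hand integral a weaker exponential factor, ruling out any naive pointwise dominance, and the density ratio is not monotone so likelihood-ratio ordering cannot be invoked directly. I expect the comparison to proceed either by a further change of variables that aligns the two exponential kernels or, equivalently, by showing that the density difference $p_\Theta-p_{\Theta_*}$ admits a single sign change on $(0,\infty)$, which is sufficient for the stochastic ordering.
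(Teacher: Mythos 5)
Your two distributional representations are correct and in fact coincide with the paper's: the paper shows $2c\,\Theta^\alpha \stackrel{d}{=} E\,S^\alpha/a_1$ and $2c\,\Theta_*^\alpha \stackrel{d}{=} E\,S_*^\alpha/a_1$ with $E$ exponential, $S$ having density proportional to $s^\alpha/(\expp{s}-1)$ and $S_*$ proportional to $s^{\alpha-1}/(1+bs)$, obtained by exactly your manipulations (conditioning on $A$ and rescaling by $2\beta t$; the identity $1/Z_0=\int_0^\infty \expp{-uZ_0}du$ together with \reff{eq:mean} and \reff{eq:lap-Z}). The problem is that your argument stops at the point where the proposition actually has content: the inequality between the two exponential mixtures is stated as "the main obstacle" and you only say you \emph{expect} it to follow from an alignment of kernels or a single sign change. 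That comparison is the whole proof of the stochastic ordering, so as written the proposal has a genuine gap.

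The gap is closed by the first of your two suggested routes, and it is short. Replace $X_*$ by $S_*=(C_{00}/C_0')^{1/\alpha}X_*$; then both tails take the form $\rE[\expp{-\gamma C_{00}q^\alpha S^{-\alpha}}]$ and $\rE[\expp{-\gamma C_{00}q^\alpha S_*^{-\alpha}}]$ with the \emph{same} constant in the exponent, so it suffices to prove $\rP(S\leq s)\geq \rP(S_*\leq s)$ for all $s$ (monotone functionals of a common scale pass through the mixture). The rescaled density of $S_*$ is $\frac{1}{C_{00}}\frac{s^{\alpha-1}}{1+bs}$ with $b=(C_0'/C_{00})^{1/\alpha}$, i.e.\ it carries the \emph{same} normalizing constant $1/C_{00}$ as the density $\frac{1}{C_{00}}\frac{s^\alpha}{\expp{s}-1}$ of $S$ --- this is the point your "$C_{00}<C_0'$" remark obscures: after rescaling there is no mismatch of constants left. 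The ratio is $h(s)=\frac{\expp{s}-1}{s(1+bs)}$, with $h(0+)=1$ and $h(+\infty)=+\infty$; studying $g(s)=\expp{s}-1-s-bs^2$ (note $g''(s)=\expp{s}-2b$, and $b>1/2$ is forced, since otherwise $h\geq 1$ everywhere, contradicting that both densities integrate to $1$) shows $g$ is negative then positive, so $h$ crosses the level $1$ exactly once. A single sign change of the density difference together with equal total mass gives the stochastic ordering $S\leq_{\mathrm{st}}S_*$, hence the claim. You correctly note that likelihood-ratio (monotone ratio) ordering is unavailable, but it is not needed; the single-crossing-at-level-one argument is what the paper uses and what you should carry out explicitly.
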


\begin{proof}
Firstly, we  give the distribution of $\Theta$.  We have for $\theta>0$:
\begin{align*}
   \frac{\rP(\Theta\in d\theta)}{d\theta}
&= c\theta^{\alpha-1} \int_0^\infty dt \; \frac{4\beta  \theta}{\expp{2
    \beta \theta t} - 1} 
\exp\left(2c \int_0^\infty \log(1 - \expp{-2 \beta q
    t}) \, q^{\alpha-1} dq \right)\\
&= 2c\theta^{\alpha-1} \int_0^\infty  \frac{ds}{\expp{s} -1} \expp{-2c
  \, a_1 \theta^\alpha s^{-\alpha}},
\end{align*}
where we used   \reff{eq:joint-athe} for the first equality, the change
of variables $r=2\beta q t$ ($t$ fixed) and $s=2\beta \theta t$ as well
as 
\[
a_1= - \int_0^\infty  \log\left(1-\expp{-r}\right)\, r^{\alpha-1} \, dr
\]
for the second equality. Set $Q=2c \, \Theta^\alpha$ so that for $q>0$:
\[
  \frac{\rP(Q\in dq)}{dq}
=
\inv{\alpha a_1} \int_0^\infty  \frac{s^\alpha ds}{\expp{s} -1}\;  a_1
s^{-\alpha} \expp{-a_1 
  s^{-\alpha} q }.
\]
Then we deduce that $Q$ is distributed as $E S^\alpha/a_1$, where $E$ is an
exponential random variable with mean $1$ independent of the random
variable $S$ whose density is:
\[
f(s)=\inv{\alpha a_1} \frac{s^\alpha }{\expp{s} -1}\;\ind_{\{s>0\}}.
\]

Secondly, we give the distribution of $\Theta_*$. Let $F$ be a non-negative
measurable function, we have:
\begin{align*}
   \rE\left[F(\Theta_*)\right]
&= \rE\left[\sum_{i\in I} \frac{Y^i_{-t_i}}{Z_0} F(\theta_i)\right]\\
&=\int_0^\infty  d\lambda \, \rE\left[\sum_{i\in I}F(\theta_i)
  Y^i_{-t_i}\expp{-\lambda Y^i_{-t_i}} \expp{-\lambda \sum_{j\in I\backslash
      \{i\}} Y^j_{-t_j}}\right]\\
&=2c\beta \int_0^\infty  d\lambda \, \int _0^\infty  dt \int_0^\infty 
\theta^{\alpha-1} d\theta \; F(\theta) \N^{\psi_\theta}\left[Y_t\expp{-\lambda
    Y_t}\right] \rE\left[\expp{-\lambda Z_0}\right]\\
&=2c\beta \int_0^\infty  d\lambda \, \int _0^\infty  dt \int_0^\infty 
\theta^{\alpha-1} d\theta \; F(\theta) \frac{\expp{-2\beta \theta
    t}}{\left(1+\lambda \Delta^\theta_t\right)^2} \expp{-2 c   \int_0^{\infty}
  \log\left(1+\frac{\lambda}{2q}\right) \, q^{\alpha-1} dq},
\end{align*}
where we used the definition of $\Theta_*$ for the first equality, the
PPM properties for the third equality, \reff{eq:mean} and
\reff{eq:lap-Z} for the fourth equality. Set 
\[
a_2= \int_0^\infty  \log\left(1+\frac{1}{2q}\right) \, q^{\alpha-1} dq
\]
and use the change of variables $q =\lambda r$ ($\lambda$ fixed), 
$u=1- \expp{-2\beta \theta t}$ ($\theta$ fixed) and
$s=a \theta/\lambda$ ($\theta$ fixed) with $a^\alpha= a_1/a_2$
and $b=2/a$ to get:
\[
  \rE\left[F(\Theta_*)\right]
= 2c \int_0^\infty 
\theta^{\alpha-1} d\theta \; F(\theta) 
\int_0^\infty  \frac{ds}{s(1+bs)} \expp{- 2c\, a_1 s^{-\alpha}
  \theta^\alpha}.
\]
Set $Q_*=2c\, \Theta_*^\alpha$ and we deduce that:
\[
  \frac{\rP(Q_*\in dq)}{dq}
=
\inv{\alpha a_1} \int_0^\infty  \frac{s^\alpha ds}{s(1+bs)}\;  a_1
s^{-\alpha} \expp{-a_1 
  s^{-\alpha} q }.
\]
Then similarly $Q_*$ is distributed as $E S_*^\alpha/a_1$ with $S_*$
a  random
variable independent of $E$ and  with density:
\[
f_*(s)=\inv{\alpha a_1} \frac{s^\alpha }{s(1+bs)}\;\ind_{\{s>0\}}.
\]

Thirdly,        define        
\[
h(s)=\frac{f_*(s)}{f(s)}=\frac{\expp{s} -1}{s(1+bs)}\;\ind_{\{s>0\}},
\]
so that $\rE[H(S_*)]=\rE[H(S)h(S)]$. A  study of the continuous function
$h$  and   the  condition   that  $\rE[h(S)]=1$  yield   that  $h(0)=1$,
$\lim_{s\rightarrow+\infty } h(s)=+\infty $  and there exists $s_0$ such
that  $h\leq 1$  on $[0,s_0]$  and $h\geq  1$ on  $[s_0, +\infty  )$. We
deduce that $\rP(S_*\leq s)\leq \rP(S\leq s)$ for all $s\geq 0$, that is
$S_*$  is  stochastically  larger  than  $S$. This  implies  that  $Q_*$
(resp. $\Theta_*$) is stochastically larger than $Q$ (resp. $\Theta$).
\end{proof}

\subsection{Number of families}
\label{sec:Nstable}
We  compare the  number  of  families  with the  neutral  case
(quadratic and stable  branching mechanism). Recall $\Lambda(s)$ defined
in  \reff{eq:def-L}. We  deduce from  \reff{eq:C3}  that $\Lambda(s)=C_3
s^{-\alpha}$.  Lemma \ref{lem:cvN/L} implies that a.s.:
\begin{equation}
   \label{eq:saN}
\lim_{s \rightarrow 0+} s^\alpha \, N_s = C_3.
\end{equation}

We  can  compare \reff{eq:saN}  with  the  stationary  stable case  with
immigration,  see \cite{cd:spstsbp},  that  is $\psi(\lam)  = \lam^a+  b
\lam$ with  $1 <  a \leq 2$  and $ b  > 0$.   According to Section  6 in
\cite{cd:spstsbp}, we have that the  number $N^*_s$ of families alive at
time $-s$ which are still alive  at time $0$ is a.s.  equivalent, as $s$
goes down  to 0,  to $\Lambda^*$ defined  by (31)  in \cite{cd:spstsbp}.
Using that:
\[
c(r) = \frac{\expp{-b r}}{[b^{-1}(1 - \expp{-(a - 1) b r})]^{\frac{1}{a
      - 1}}},
\]
see Example 3.1 in \textsc{Li} \cite{l:mvbmp}, it is easy to get that:
\[
\Lambda^* (s) = -\frac{a}{a - 1}\log(1 - \expp{-(a - 1)b s}).
\]
Notice that  $\Lambda^*(s)$ is equivalent to 
 $(a/a-1)|\log(s)|$ as $s$ goes down to 0. This implies that a.s. 
\[
\lim_{s \rightarrow 0+} |\log(s)|^{-1} \, N_s^* = \frac{a-1}{a}\cdot
\] 
Therefore the number  of families for stable case  with neutral mutation
is much smaller than that of CB process with non-neutral mutations (that
is with stable rate of mutation).

\subsection{Fluctuations of the number of ancestors}

We consider the fluctuations of the number of ancestors. Recall
notations from Theorem \ref{theo:fluct}. If $\alpha<1/2$, then
conditions \reff{eq:cond-fluc} hold and the fluctuations of $M_s$  are
given in Theorem \ref{theo:fluct}. For $\alpha\geq 1/2$, conditions
\reff{eq:cond-fluc}  do not hold and the fluctuations
of $M_s$ are given in the next Proposition. We define for $\alpha\geq 1/2$:
\[
h(\alpha)= c2^{1-\alpha} \int_0^\infty  \frac{dq}{q^{2-\alpha} }\, \frac{\expp{q}- 1
    -q}{\expp{q }-1} \cdot
\]

\begin{prop}
   \label{prop:fluct-stab}
Let $\alpha\in [1/2,1)$. We have the following convergence in distribution of $((Z_0,
(\beta s)^{\alpha-1}  (\beta s M_s -Z_{-s}),
s\geq 0)$ towards $(Z_0, \sqrt{ Z_0} \,G -h(1/2))$ if $\alpha=1/2$ and
towards  $(Z_0, -h(\alpha))$ if $\alpha\in (1/2,1)$,  with
$G$ a  standard Gaussian random variable independent of $Z_0$.
\end{prop}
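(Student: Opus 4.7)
The plan is to compute the joint Laplace transform $\rE\bigl[\exp(-\rho Z_{-s} - \eta(\beta s)^{\alpha-1}(\beta s M_s - Z_{-s}))\bigr]$ and identify its $s \to 0$ limit; the continuity of $Z$ (Theorem~\ref{theo:cont-0}) then converts $Z_{-s}$ into $Z_0$ via a Slutsky-type argument. Applying Proposition~\ref{prop:ZZM-Laplace} with the constant functions $\rho(\cdot) \equiv \rho - \eta(\beta s)^{\alpha-1}$, $\lambda \equiv 0$ and $\eta(\cdot) \equiv \eta(\beta s)^\alpha$ (extended to signed parameters by analytic continuation, as in the proof of Theorem~\ref{theo:fluct}) and changing variables $u = 2\beta\theta s$ with $\tau := \beta s$ and $\varepsilon := \eta\tau^\alpha$, the log of this Laplace transform becomes
\[
-2c(2\tau)^{-\alpha}\int_0^\infty u^{\alpha - 1}\log(1 + B_s(u))\,du, \qquad B_s(u) = \frac{\rho\tau - \varepsilon}{u} + \frac{1 - e^{-\varepsilon}}{e^u - 1}.
\]

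The core step is the decomposition $\log(1 + B_s(u)) = \log(1 + \tilde\rho\tau/u) + \log\bigl(1 + R_s(u)/(1 + \tilde\rho\tau/u)\bigr)$ with $R_s := B_s - \tilde\rho\tau/u$ and $\tilde\rho := \rho - (\eta^2/2)\ind_{\{\alpha = 1/2\}}$. The first term, integrated against $-2c(2\tau)^{-\alpha}u^{\alpha-1}$, produces \emph{exactly} $\log\rE[\exp(-\tilde\rho Z_0)]$ (invert the change of variables and apply \reff{eq:lap-Z}). For the second term, writing $\phi(\varepsilon) := (1-e^{-\varepsilon})/\varepsilon$ and $f(u) := 1/(e^u - 1) - 1/u$ yields
\[
R_s(u) = \varepsilon\phi(\varepsilon)\,f(u) + \frac{\varepsilon(\phi(\varepsilon) - 1)}{u} + \frac{\eta^2\tau}{2u}\,\ind_{\{\alpha = 1/2\}}.
\]
The choice of $\tilde\rho$ is calibrated so that, for $\alpha = 1/2$, the two $1/u$-singular pieces (both of order $\varepsilon^2 = \eta^2\tau$) cancel up to $O(\varepsilon^3/u)$, whereas for $\alpha \in (1/2, 1)$ the surviving $\varepsilon(\phi(\varepsilon) - 1)/u = O(\varepsilon^2/u)$ term, after integration against $u^{\alpha-1}$ and multiplication by $(2\tau)^{-\alpha}$, is of order $\tau^{2\alpha - 1} \to 0$. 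In either regime the leading behaviour of $R_s$ is $\varepsilon f(u)$.

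Expanding $\log(1 + R_s/(1 + \tilde\rho\tau/u)) = R_s/(1 + \tilde\rho\tau/u) + O(R_s^2)$, the dominant contribution is
\[
-2c(2\tau)^{-\alpha}\varepsilon\int_0^\infty \frac{u^\alpha f(u)}{u + \tilde\rho\tau}\,du = -c\,2^{1-\alpha}\eta\int_0^\infty \frac{u^\alpha f(u)}{u + \tilde\rho\tau}\,du,
\]
since $(2\tau)^{-\alpha}\varepsilon = 2^{-\alpha}\eta$. Dominated convergence (the integrand is bounded by $u^{\alpha-1}|f(u)|$, integrable on $(0, \infty)$ because $f(u) \to -1/2$ at $0$ and $f(u) \sim -1/u$ at $\infty$) yields the limit $-c\,2^{1-\alpha}\eta\int_0^\infty u^{\alpha-1}f(u)\,du = \eta h(\alpha)$, using the identity $u^{\alpha-1}f(u) = -u^{\alpha-2}(e^u - 1 - u)/(e^u - 1)$. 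The remaining $O(R_s^2)$ and higher-order terms in $\varepsilon$ vanish as $\tau \to 0$ by analogous estimates.

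Summing the two contributions, the log Laplace transform tends to $\log\rE[\exp(-\tilde\rho Z_0)] + \eta h(\alpha)$. For $\alpha \in (1/2, 1)$ this is the Laplace transform of $(Z_0, -h(\alpha))$; for $\alpha = 1/2$, using $\rE[e^{-\eta\sqrt{Z_0}\, G} \mid Z_0] = e^{\eta^2 Z_0/2}$, it is that of $(Z_0, \sqrt{Z_0}\, G - h(1/2))$ with $G$ a standard Gaussian independent of $Z_0$. Combined with $Z_{-s} \to Z_0$ a.s., this yields the asserted joint convergence in distribution. The main technical obstacle is controlling the $\varepsilon$-expansion uniformly in $u$, in particular near $u = 0$ where both $B_s$ and $\tilde\rho\tau/u$ blow up; one works with the ratio $R_s/(1 + \tilde\rho\tau/u)$, which is uniformly $O(\varepsilon)$ thanks to the boundedness of $f$ on $(0, \infty)$.
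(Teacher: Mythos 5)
Your proposal is correct, and it starts from the same point as the paper: the joint Laplace transform of $(Z_{-s},(\beta s)^{\alpha-1}(\beta s M_s-Z_{-s}))$ obtained from Proposition \ref{prop:ZZM-Laplace}, followed by the substitution $u=2\beta\theta s$, which produces exactly the integrand $\log(1+B_s(u))$ (the paper's $D(q,t)$ with $q=u$, $t=\beta s$). Where you genuinely differ is in the passage to the limit. The paper splits the $q$-integral at a small threshold $a$, applies dominated convergence on $[a,\infty)$, and on $(0,a)$ sandwiches $D(q,t)$ between two explicit functions $D_{\pm\varepsilon}$ whose integrals are computed in closed form, the $1/q$-singularity being absorbed into the shift $\rho\mapsto\rho-\frac{\eta^2}{2}(1+z)\ind_{\{\alpha=1/2\}}$; the constant $h(\alpha)$ is then reassembled from the two regions as $a\to 0$. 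You instead peel off $\log(1+\tilde\rho\tau/u)$ additively, which reproduces $\log\rE[\expp{-\tilde\rho Z_0}]$ \emph{exactly for every} $s$ (no limit needed for that piece), and handle the remainder by a first-order expansion of the logarithm with the uniform bound $R_s/(1+\tilde\rho\tau/u)=O(\varepsilon)+O(\tau^{2\alpha-1})$; the cancellation of the $1/u$-singular parts at order $\varepsilon^2$ when $\alpha=1/2$ is made explicit through $\phi(\varepsilon)-1=-\varepsilon/2+O(\varepsilon^2)$, which is precisely what motivates the recentring $\tilde\rho=\rho-\frac{\eta^2}{2}\ind_{\{\alpha=1/2\}}$. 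Your route avoids the two-region splitting and the $\varepsilon$-sandwich and isolates $h(\alpha)$ in a single dominated-convergence step on $\int_0^\infty u^{\alpha}f(u)/(u+\tilde\rho\tau)\,du$; the paper's route avoids expanding the logarithm and works only with monotone bounds on its argument. Both arguments leave the same points implicit (the analytic continuation to the signed coefficient of $Z_{-s}$, the control of the $O(R_s^2)$ remainder in your case versus the interchange of the limits in $t$, $a$ and $\varepsilon$ in the paper's), and both conclude identically by invoking the continuity of $Z$ to replace $Z_{-s}$ by $Z_0$.
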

We explain the contribution  of $-h(\alpha)$ as a law of large number
effect produced by the large number  of (small) populations with large
parameter $\theta$. This effect is negligible if conditions
\reff{eq:cond-fluc} hold that is $\alpha\in (0,1/2)$ but is significant
for $\alpha\in [1/2,1)$. 

\begin{proof}
Mimicking the first part of the proof of Theorem \ref{theo:fluct}, we get
that for $\eta \geq 0$, $\rho> \eta^2/2 $ (or $\rho>0$
if $\alpha>1/2)$ and $s>0$ small:
\[
\rE\left[\exp{\left(- \rho Z_{-s} -\eta(\beta
        s)^{\alpha-1}\left(\beta s M_s -Z_{-s}\right) \right)}\right]
= \expp{-2^{1-\alpha} c B(\beta s)},   
\]
with $B$ defined for $t$ small by:
\[
B(t)=2^\alpha \int_0^\infty  \frac{d\theta}{\theta^{1-\alpha}}
\,\log \left( 1+ \frac{\rho}{2\theta} + \frac{\eta t^\alpha}{2\theta t
  } \left(\frac{1-\expp{-\eta t^\alpha}}{\eta t^\alpha}\, \frac{2\theta
      t}{\expp{2 \theta t } -1} -1\right)
 \right).
\]
Use $q=2\theta t$ to get:
\[
B(t)=\int_0^\infty  \frac{dq}{q^{1-\alpha}} D(q,t)
\quad\text{with}\quad 
D(q,t)= t^{-\alpha} \log \left( 1+ \frac{\rho t}{q} + \frac{\eta t^\alpha}{q
  } \left(\frac{1-\expp{-\eta t^\alpha}}{\eta t^\alpha}\,
    \frac{q}{\expp{q} -1} -1\right) \right) .
\]
Notice that $B(t)$ is well defined for $\eta>0$, $\rho>\eta^2
\ind_{\{\alpha=1/2\}}/2$ and $t>0$ small (depending on $\rho,  \eta
$). 

Let $\varepsilon>0$ be small and $a\in (0,\varepsilon)$ such that for all $q\in (0,a]$,
we have:
\begin{equation}
   \label{eq:majo-a}
\val{\inv{q}\left(\frac{q}{\expp{q}-1}-1\right) +\inv{2}}<\frac{\varepsilon}{4}\cdot
\end{equation}

We first consider $q\geq a$. For $t$ small enough (depending on $\rho,
\eta, \varepsilon, a$), we
have for all $q\geq a$: 
\[
\val{D(q,t)}<  \frac{4(\eta+1)}{q}.
\]
Since $\lim_{t\rightarrow 0} D(q,t)=-  \eta (\expp{q}- 1
    -q)/ q(\expp{q} -1)$, we deduce by dominated convergence that:
\begin{equation}
   \label{eq:cvD>a}
\lim_{t\rightarrow 0+} \int_a^\infty  \frac{dq}{q^{1-\alpha}} D(q,t)= 
- \eta\int_a^\infty    \frac{dq}{q^{2-\alpha} }\, \frac{\expp{q}- 1
    -q}{\expp{q }-1} \cdot
\end{equation}

Secondly, we consider $q\in (0,a)$. Notice that:
\[
D(q,t)=t^{-\alpha} \log\left( 1 + E(q,t) + \frac{F(q,t)}{q} \right),
\]
with
\[
E(q,t)=\eta t^\alpha \, \frac{1-\expp{-\eta
      t^\alpha}}{\eta t^\alpha} \, \inv{q} \left(
    \frac{q}{\expp{q} -1} -1\right) 
\quad\text{and}\quad
F(q,t) = \rho t+ \eta t^\alpha \left(\frac{1-\expp{-\eta t^\alpha}}{\eta
    t^\alpha} -1\right) . 
\]
We get that for $t$ small enough (depending on $\rho, \eta,
\varepsilon$) and $q\in (0,a)$:
\begin{equation}
   \label{eq:Del-a>1/2}
\rho t  - \frac{\eta^2}{2} t^{2\alpha} (1+\varepsilon)
\leq  F(q,t) \leq 
\rho t  - \frac{\eta^2}{2} t^{2\alpha} (1-\varepsilon)
\end{equation}
as well as, using \reff{eq:majo-a},
\[
- \frac{\eta t^\alpha}{2} (1+\varepsilon) 
\leq  E(q,t) \leq - \frac{\eta t^\alpha}{2} (1-\varepsilon) .
\]
This implies that:
\begin{equation}
   \label{eq:D+-}
D_{+\varepsilon}(q,t) \leq  D(q,t) \leq  D_{-\varepsilon}(q,t),
\end{equation}
with for given $z$ and $t$ small (depending on $\rho, \eta, z$):
\[
D_z(q,t)=t^{-\alpha} \log\left( 1 -\frac{\eta t^\alpha}{2} (1+z)  +
  \frac{\rho t  - \frac{\eta^2}{2} t^{2\alpha} (1+z) }{q} \right).
\]
Since 
\[
 \int_0^a \frac{dq}{q^{1-\alpha}} \log \left( 1- 
  a_1+  \frac{a_2}{q}
\right)
=\log \left( 1- a_1\right) \int_0^a
\frac{dq}{q^{1-\alpha}}
+ \frac{a_2^\alpha }{(1-a_1)^\alpha} \int_0^{\frac{(1-a_1)}{a_2}a}
\frac{dq}{q^{1-\alpha}} \log \left( 1+ \inv{q} 
\right),
\]
we deduce that for $z\in \{+\varepsilon, -\varepsilon\}$:
\begin{multline*}
\lim_{t\rightarrow 0}    \int_0^a  \frac{dq}{q^{1-\alpha}} \, D_z(q,t)\\
\begin{aligned}
&= - \frac{\eta}{2}(1+z) \int_0^a
\frac{dq}{q^{1-\alpha}} + \left(\rho -
  \frac{\eta^2}{2}(1+z)\ind_{\{\alpha=1/2\}}  \right)^\alpha
\int_0^\infty  \frac{dq}{q^{1-\alpha}} \log \left( 1+ \inv{q} 
\right) \\
&= - \frac{\eta}{2}(1+z) \int_0^a
\frac{dq}{q^{1-\alpha}} +
\int_0^\infty  \frac{dq}{q^{1-\alpha}} \log \left( 1+ \frac{\rho -
  \frac{\eta^2}{2}(1+z)\ind_{\{\alpha=1/2\}} }{q} 
\right). 
\end{aligned}
\end{multline*}
Since $\varepsilon$ can be arbitrarily small and that $a<\varepsilon$,
we deduce from \reff{eq:cvD>a}, \reff{eq:D+-} and the previous
convergence that:
\[
\lim_{t\rightarrow 0}    B(t)= \lim_{t\rightarrow 0}    \int_0^\infty
\frac{dq}{q^{1-\alpha}} \, D(q,t) 
= - \frac{\eta h(\alpha)}{c2^{1-\alpha} } + \int_0^\infty
\frac{dq}{q^{1-\alpha}} \log \left( 1+ \frac{\rho - 
  \frac{\eta^2}{2}\ind_{\{\alpha=1/2\}} }{q} 
\right). 
\]
This implies that:
\begin{multline*}
\lim_{s\rightarrow 0}  
\rE\left[\exp{\left(- \rho Z_{-s} -\eta(\beta
        s)^{\alpha-1}\left(\beta s M_s -Z_{-s}\right) \right)}\right]\\
= \rE\left[\exp{\left(- \left(\rho -
  \frac{\eta^2}{2}\ind_{\{\alpha=1/2\}} \right) Z_0  +
 h(\alpha)\eta \right)}\right]. 
\end{multline*}
This and the continuity of $Z$ give the result. 
\end{proof}

\noindent
\textbf{Acknowledgment}: H. Bi would like to express his gratitude to J.-F. 
Delmas for his help during the stay at CERMICS.

\end{document}